\DeclareFontFamily{U}{mathx}{\hyphenchar\font45}
\DeclareFontShape{U}{mathx}{m}{n}{
      <5> <6> <7> <8> <9> <10>
      <10.95> <12> <14.4> <17.28> <20.74> <24.88>
      mathx10
      }{}
\DeclareSymbolFont{mathx}{U}{mathx}{m}{n}
\DeclareMathAccent{\widecheck}{0}{mathx}{"71}
\DeclareMathAccent{\wideparen}{0}{mathx}{"75}
\newcommand{\coleq}{\coloneqq}
\newcommand{\eqcol}{\eqqcolon}
\newcommand{\bine}{\mathbin{\varepsilon}}
\newcommand{\e}{\varepsilon}
\newcommand{\Nu}{\mathscr{N}}
\newcommand{\Co}{\mathscr{C\!o}}
\theoremstyle{plain}
\newtheorem{theorem}{Theorem}
\newtheorem*{definition*}{Definition}
\newtheorem{proposition}[theorem]{Proposition}
\newtheorem{lemma}[theorem]{Lemma}
\newtheorem*{remark*}{Remark}
\DeclareMathOperator{\supp}{supp}
\DeclareMathOperator{\can}{can}
\DeclareMathOperator{\Id}{Id}
\newcommand{\cB}{{\mathcal{B}}}
\newcommand{\cC}{\mathcal{C}}
\newcommand{\cD}{\mathcal{D}}
\newcommand{\cE}{\mathcal{E}}
\newcommand{\cH}{\mathcal{H}}
\newcommand{\cL}{\mathcal{L}}
\newcommand{\cN}{\mathcal{N}}
\newcommand{\cO}{\mathcal{O}}
\newcommand{\cS}{\mathcal{S}}
\newcommand{\bN}{\mathbb{N}}
\newcommand{\bR}{\mathbb{R}}
\newcommand{\bC}{\mathbb{C}}
\newcommand{\pd}{\partial}
\newcommand{\ud}{\mathrm{d}}
\newcommand{\abso}[1]{\left|#1\right|}
\newcommand{\norm}[1]{\left\lVert#1\right\rVert}
\title{The space $\dot\cB'$ of distributions vanishing at infinity -- duals of tensor products}
\author{E.~A.~Nigsch\footnote{Wolfgang-Pauli-Institut, Oskar-Morgenstern-Platz 1, 1090 Wien, Austria. e-mail: \href{mailto:eduard.nigsch@univie.ac.at}{eduard.nigsch@univie.ac.at}. Corresponding author.}, N. Ortner\footnote{Institut für Mathematik, Universität Innsbruck, Technikerstraße 13, 6020 Innsbruck, Austria.}}
\date{April 11, 2016}
\begin{document}

\maketitle

\begin{abstract}
Analogous to L.~Schwartz' study of the space $\cD'(\cE)$ of semi-regular distributions we investigate the topological properties of the space $\cD'(\dot\cB)$ of semi-regular vanishing distributions and give representations of its dual and of the scalar product with this dual. In order to determine the dual of the space of semi-regular vanishing distributions we generalize and modify a result of A.\ Grothendieck on the duals of $E \widehat \otimes F$ if $E$ and $F$ are quasi-complete and $F$ is not necessarily semi-reflexive.
\end{abstract}

{\bfseries Keywords: }semi-regular vanishing distributions, duals of tensor products

{\bfseries MSC2010 Classification: 46A32, 46F05}

\section{Introduction}\label{sec0}

L.~Schwartz investigated, in his theory of vector-valued distributions \cite{zbMATH03145498,zbMATH03145499}, several subspaces of the space $\cD'_{xy} = \cD'(\bR^n_x \times \bR^m_y)$ that are of the type $\cD'_x(E_y)$ where $E_y$ is a distribution space.

Three prominent examples are:
\begin{enumerate}[label=(\roman*)]
 \item $\cD'_x(\cS_y')$ -- the space of semi-temperate distributions, for which the partial Fourier transform is defined \cite[p.~123]{zbMATH03145498};
 \item $\cD'_x(\cD'_{L^1,y})$ -- the space of partially summable distributions, for which the convolution of kernels is defined \cite[\S 2, p.~546]{zbMATH03163214}, as a generalization of the convolvability condition for two distributions \cite[pp.~131,132]{zbMATH03145498};
 \item $\cD'_x(\cE_y)$ -- the space of semi-regular distributions (see \cite{MR0090777} and \cite[p.~99]{zbMATH03145498}).
\end{enumerate}

In this paper we will be concerned with the space $\cD'(\dot\cB)$ of ``semi-regular vanishing'' distributions.

{\bfseries Notation and Preliminaries.} We will mostly build on notions from \cite{TD,zbMATH03199982,zbMATH03145498, zbMATH03145499}.
$\cD'(E)$ is defined as $\cD' \bine E$, which space coincides with $\cD' \widehat\otimes_\e E = \cD' \widehat\otimes_\pi E \eqcol \cD' \widehat\otimes E$ in the 3 examples above. If $E,F$ are two locally convex spaces then $E \widehat\otimes_\pi F$, $E \widehat\otimes_\e F$ and $E \widehat\otimes_\iota F$ denote the completion of their projective, injective, and inductive tensor product, respectively; writing $\wideparen\otimes$ in place of $\widehat\otimes$ means that we take the quasi-completion instead. The subscript $\beta$ in $E \otimes_\beta F$ \cite[p.~12, 2°]{zbMATH03145499} refers to the finest locally convex topology on $E \otimes F$ for which the canonical injection $E \times F \to E \otimes F$ is hypocontinuous with respect to bounded sets.
Given a locally convex space $E$, $E_b'$ denotes its strong dual, $E_\sigma'$ its weak dual and $E_c'$ its dual with the topology of uniform convergence on absolutely convex compact sets. In absence of any of these designations, $E'$ carries the strong dual topology. For the definition of $\overline{D}(E)$ see \cite[p.~63]{zbMATH03145498} and \cite[p.~94]{FDVV}. $\dot\cB'$ is the space of distributions vanishing at infinity, i.e., the closure of $\cE'$ in $\cD'_{L^\infty}$ \cite[p.~200]{TD}.
$\Nu(E,F)$ and $\Co(E,F)$ denote the space of nuclear and compact linear operators $E \to F$, respectively. The normed space $E_U$ for an absolutely convex zero-neighborhood $U$ in $E$ is introduced in \cite[Chap.~I, p.~7]{zbMATH03199982}, with associated canonical mapping $\Phi_U \colon E \to \widehat E_U$. $\cL(E,F)$ is the space of continuous linear mappings $E \to F$. By $\cB^s(E,F)$ and $\cB(E,F)$ we denote the spaces of separately continuous and continuous bilinear forms $E \times F \to \bC$, respectively, and $\cB^h(E,F)$ is the space of separately hypocontinuous bilinear forms; for any of these spaces, the index $\e$ denotes the topology of bi-equicontinous convergence.

{\bfseries Motivation.} In order to prove, e.g., the equivalence of $S(x-y)T(y) \in \cD'_x(\cD'_{L^1,y})$ for two distributions $S,T$ (which means, by definition, that $S,T \in \cD'(\bR^n)$ are convolvable) to the inequality
\begin{multline*}
\forall K \subset \bR^{n} \textrm{ compact }\exists C>0\ \exists m \in \bN_0\\
\forall \varphi \in \cD(\bR^{2n}) \textrm{ with }\supp \varphi \subseteq \{ (x,y) \in \bR^{2n}: x+y \in K\}:\\
\abso{\langle \varphi(x, y), S(x)T(y) \rangle } \le C \sup_{\abso{\alpha} \le m} \norm{ \pd^\alpha \varphi }_\infty,
\end{multline*}
it is advantagous to know of a ``predual'' of $\cD'(\cD'_{L^1})$, i.e., the space $\overline{\cD}\ ( \dot\cB) = \varinjlim_K ( \cD_K \widehat\otimes \dot\cB ) = \cD \widehat\otimes_\iota \dot\cB$ for which $ ( \overline{\cD} ( \dot\cB ) )'_b = \cD' \widehat\otimes \cD'_{L^1}$ \cite[Prop.~3, p.~541]{zbMATH03163214}.
A ``predual'' of $\cD'(\cS')$ and $\cD'(\cD'_{L^1})$ can easily be found by Corollary 3 in \cite[p.~104]{zbMATH03145498}, which states that $(\overline{\cD}(E))'_b = \cD'(E')$ if $E$ is a Fr\'echet space. A ``predual'' of $\cD'(\cE)$ is the space $\cD \widehat\otimes_\iota \cE'$ (see Propositions \ref{prop1} and \ref{prop2}).

In the memoir \cite{MR0090777} L.~Schwartz investigated the space $\cD'(\cE)$ of semi-regular distributions. For reasons of comparison we present the main features thereof in Section \ref{sec1}, i.e., in Proposition \ref{prop1} properties of $\cD'(\cE)$, in Proposition \ref{prop2} the dual and a ``predual'' of $\cD'(\cE)$ and in Proposition \ref{prop3} an explicit expression for the scalar product of $K(x,y) \in \cD'_x(\cE_y)$ with $L(x,y) \in \cD_x \widehat\otimes_\iota \cE'_y$. These propositions generalize the corresponding propositions in \cite{MR0090777} and new proofs are given.

In \cite{MR573803} we found the condition
\begin{equation}\label{eq1}
 \forall \varphi \in \cD: (\varphi * \check S) T \in \dot\cB'
\end{equation}
for two distributions $S,T$, in order that $(\pd_j S ) * T = S * (\pd_j T)$ under the assumption that $(\pd_j S, T)$ and $(S, \pd_j T)$ are convolvable (see also \cite[p.~559]{HorvathSI}). The equivalence of \eqref{eq1} and
\begin{equation}\label{eq2}
S(x-y)T(y) \in \cD'_x(\dot\cB'_y)
\end{equation}
is proven in \cite{zbMATH05770245}. Due to the regularization property
\[ S \in \dot\cB' \Longleftrightarrow S(x-y) \in \cD'_y ( \dot\cB_x ) \]
for a distribution S \cite[remarque 3°, p.~202]{TD} we are motivated to investigate the space $\cD'(\dot\cB)$ of ``semi-regular vanishing distributions'' analogously to $\cD'(\cE)$ in \cite{MR0090777}, i.e.,
\begin{itemize}
 \item to state properties of $\cD'(\dot\cB)$ in Proposition \ref{prop4},
 \item to determine the dual of $\cD'(\dot\cB)$ in Proposition \ref{prop5},
 \item to express explicitly the scalar product in Proposition \ref{prop6}, and
 \item to determine the transpose of the regularization mapping $\dot\cB' \to \cD'_y ( \dot\cB_x)$, $S \mapsto S(x-y)$ in Proposition \ref{prop7}.
\end{itemize}

{\bfseries Duals of tensor products.} Looking for $(\cD'(\dot\cB))'_b$ we make use of the following duality result of A.~Grothendieck which allows -- in contrast to the corresponding propositions in \cite[\S 45, 3.(1), p.~301, \S 45, 3.(5), p.~302, \S 45, 3.(7), p.~304]{Koethe2}, \cite[16.1.7, p.~346]{Jarchow}, \cite[IV, 9.9, p.~175; Corollary 1, p.~176; Ex.~32, pp.~198, 199]{Schaefer} and \cite[\S 4, Satz 1, p.~212]{zbMATH03498641} -- to determine the duals of tensor products $E \otimes F$ in cases where $E$ and $F$ are of ``different nature'':

\begin{quote}
\cite[Chap.~II, \S 4, n°1, Lemme 9, Corollaire, p.~90]{zbMATH03199982}: Let $E$ and $F$ be complete locally convex spaces, $E$ nuclear, $F$ semireflexive. If the strong dual $(E \widehat\otimes F)'_b$ is complete then $(E \widehat\otimes F)'_b = E'_b \widehat\otimes_\iota F'_b$.
\end{quote}

Note that for our example in the introduction the assumption of semireflexivity is not fulfilled. Nevertheless we reach the conclusion by observing that
\[ (\cD'(\dot\cB))'_b = (\cD'(\cB_c))'_b, \]
$\cB_c$ being the semireflexive space $\cD_{L^\infty}$ endowed with the topology of uniform convergence on compact subsets of $\cD'_{L^1}$ \cite[p.~203]{TD} which also can be described by seminorms \cite[Prop.~1.3.1, p.~11]{zbMATH06308371}. Therefore, in Section \ref{sec3}, we prove a generalization of Grothendieck's Corollary and a modification which applies to semi-reflexive locally convex Hausdorff spaces $F$ such that the completeness of $(E \widehat\otimes F)'_b$ can be shown by the existence of a space $F_0$ such that $(E \widehat\otimes F_0)'_b$ is complete and $(E \widehat\otimes F)'_b = (E \widehat\otimes F_0)'_b$.

{\bfseries The kernel identity for $\dot\cB'$.} There is yet another condition equivalent to \eqref{eq1} and \eqref{eq2}:
\begin{equation}\label{eq3}
 \delta(z-x-y)S(x)T(y) \in \cD'_z \widehat\otimes \dot\cB'_{xy}
\end{equation}
which is nothing else than
\begin{multline*}
 \forall K \subset \bR^{n} \textrm{ compact } \exists C>0\ \exists m \in \bN_0\\
\forall \varphi \in \cD(\bR^{2n}) \textrm{ with }\supp \varphi \subseteq \{ (x,y) \in \bR^{2n}: x+y \in K\}: \\
 \abso{ \langle \varphi (x, y), S(x)T(y) \rangle } \le C \sup_{\abso{\alpha} \le m} \norm { \pd^\alpha \varphi }_1.
\end{multline*}
This equivalence can be shown by the use of the ``kernel identity''
\[ \dot\cB'_{xy} = \dot\cB'_x \widehat\otimes_\e \dot\cB_y' = \dot\cB'_x ( \dot\cB'_y) \]
which we prove in Section \ref{sec4} (Proposition \ref{prop10}). For similar identities see \cite[Prop.~17, p.~59; Prop.~28, p.~98]{zbMATH03145498} and \cite[Chap.~I, Cor.~4, p.~61; Ex., p.~90]{zbMATH03199982}.

A preliminary version of this paper was presented in two talks in Vienna and in Innsbruck by N.~O.

\section{Semi-regular distributions.}\label{sec1}

Whereas L.~Schwartz stated that the space $\cD'(\cE)$ is semireflexive \cite[p.~110]{MR0090777} we prove

\begin{proposition}[Properties of $\cD'(\cE)$]\label{prop1}
The space of semi-regular distributions $\cD'(\cE)$ is
\begin{enumerate}[label=(\roman*)]
 \item\label{prop1.1} nuclear,
 \item\label{prop1.2} ultrabornological and
 \item\label{prop1.3} reflexive.
\end{enumerate}
\end{proposition}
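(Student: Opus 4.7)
The plan is to treat the three assertions in parallel, observing that L.~Schwartz already established semireflexivity of $\cD'(\cE)$ in \cite[p.~110]{MR0090777}, so that part (iii) will fall out once (ii) is in hand.

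\textbf{Nuclearity (i).} Both factors of $\cD'(\cE) = \cD' \widehat\otimes \cE$ are nuclear: $\cE$ is the prototypical nuclear Fréchet space, and $\cD' = \varprojlim_K (\cD_K)'_b$ is nuclear as a countable projective limit of strong duals of nuclear Fréchet spaces. Since completed projective tensor products preserve nuclearity, and since nuclearity forces $\widehat\otimes_\pi = \widehat\otimes_\e$, the statement is immediate and also justifies the unambiguous notation $\widehat\otimes$.

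\textbf{Ultrabornological (ii).} This is the main obstacle. The strategy I would pursue is to use the strict LF-decomposition $\cD = \varinjlim_n \cD_{K_n}$ together with the identification $\cD'(\cE) \cong \cE(\cD')$ to present $\cD'(\cE)$ as built out of ``good'' ultrabornological pieces. Concretely, one exploits (a) that by nuclearity $\widehat\otimes$ commutes with the projective limits defining $\cD'$ and with the Fréchet structure of $\cE$, (b) that a piece of the form $(\cD_{K_n})'_b \widehat\otimes \cE^m$ (with $\cE^m$ a Fréchet layer of $\cE$) is the tensor product of a DFN space with a nuclear Fréchet space and is therefore ultrabornological by a classical Grothendieck-type result, and (c) that countable inductive limits of ultrabornological spaces are ultrabornological, which allows the $\cD$-variable structure to propagate through. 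The delicate point is the projective limit in the $\cE$-variable, since projective limits of ultrabornological spaces are not ultrabornological in general; here one would exploit that the relevant limit is actually Fréchet on each compact piece before being reassembled in the $\cD$-variable. Once (ii) is proved, barrelledness is automatic, and reflexivity (iii) follows from Schwartz's semireflexivity.

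The main difficulty is clearly (ii): one must simultaneously control the strict-inductive LF structure on the $\cD$-side and the projective Fréchet structure on the $\cE$-side, and naive limit arguments fail precisely because the class of ultrabornological spaces is not stable under projective limits.
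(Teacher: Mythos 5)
Your parts (i) and (iii) are fine and essentially coincide with the paper: nuclearity follows from Grothendieck's permanence theorem for completed tensor products of nuclear spaces, and reflexivity is obtained from semireflexivity (Schwartz, or Grothendieck's Proposition 13) together with barrelledness once (ii) is known.

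Part (ii), however, has a genuine gap, and it sits exactly at the point you yourself flag as ``delicate''. The strict inductive limit structure lives on $\cD$; after dualizing, $\cD' = \varprojlim_n (\cD_{K_n})'_b$ is a countable \emph{projective} limit, so there is no inductive limit anywhere in $\cD' \widehat\otimes \cE$ for your permanence principle (c) to act on. Commuting $\cdot\,\widehat\otimes\,\cE$ with this limit produces $\varprojlim_n \bigl((\cD_{K_n})'_b \widehat\otimes \cE\bigr)$, i.e.\ a closed subspace of a countable product, and neither closed subspaces nor general countable projective limits of ultrabornological spaces need be ultrabornological. Your proposed repair (``the relevant limit is actually Fr\'echet on each compact piece'') does not address this, and introducing the layers $\cE^m$ makes matters worse, since $\cE^m$ is not nuclear and the ``DFN $\widehat\otimes$ nuclear Fr\'echet'' result you invoke for the pieces then no longer applies. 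What is needed is the additional structural fact that the limit is isomorphic to a genuine countable \emph{product} of ultrabornological spaces, for which the permanence result does hold. This is exactly how the paper proceeds: the Valdivia--Vogt sequence space representations give $\cD' \cong s'^{\bN}$ and $\cE \cong s^{\bN}$, hence $\cD' \widehat\otimes \cE \cong (s' \widehat\otimes s)^{\bN} \cong \cO_M^{\bN}$; then $\cO_M$ is ultrabornological by Grothendieck's Th\'eor\`eme 16, and countable products of ultrabornological spaces are ultrabornological. Note also that even the ultrabornologicity of a single piece $(\cD_{K})'_b \widehat\otimes \cE \cong s' \widehat\otimes s \cong \cO_M$ is not a soft permanence statement but Grothendieck's hard theorem, so some version of the sequence space argument seems unavoidable.
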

\begin{proof}
 \ref{prop1.1} The nuclearity follows by Grothendieck's permanence result in \cite[Chap.~II, \S 2, n° 2, Th\'eor\`eme 9, 3°, p.~47]{zbMATH03199982} (see also \cite[p.~110]{MR0090777}).

\ref{prop1.2} M.~Valdivia's sequence space repesentation of $\cO_M$ \cite[Theorem 3, p.~478]{zbMATH03740168} and \cite[Chap.~I, \S 1, n°3, Proposition 6, 1°, p.~46]{zbMATH03199982} yield the isomorphisms
\[ \cD' \widehat\otimes \cE \cong s'^{\bN} \widehat\otimes s^{\bN} \cong (s' \widehat \otimes s)^\bN \cong \cO_M^{\bN}. \]
By \cite[Chap.~II, \S 4, n°4, Th\'eor\`eme 16, p.~131]{zbMATH03199982}, $\cO_M$ is ultrabornological and, hence, also $\cO_M^\bN$, by \cite[Folgerung 5.3.8, p.~106]{zbMATH03783682} (see also \cite[13.5.3, p.~281]{Jarchow} or \cite[\S 28, 8.(6), p.~392]{Koethe1}).

\ref{prop1.3} The semi-reflexivity of $\cD'(\cE)$ by \cite[Chap.~II, \S 3, n°2, Proposition 13, p.~76]{zbMATH03199982} (see also \cite[p.~110]{MR0090777}) and the barrelledness of $\cD'(\cE)$ by \ref{prop1.2} yield the reflexivity of $\cD'(\cE)$.
\end{proof}

In \cite{MR0090777}, the dual of $\cD'(\cE)$ is described by the representation of its elements as finite sums of derivatives with respect to $y$ of functions $g(x,y) \in \cD_x \widehat\otimes \cD^0_{y,c}$ \cite[Proposition 1, p.~112]{MR0090777}. Thus, $(\cD'_x ( \cE_y))' = \varinjlim_m ( \cD_x \widehat\otimes \cE_{c,y}^{\prime m})$ algebraically.
Note that L.\ Schwarz asserts on the one hand $(\cD'_x \widehat\otimes \cE_y)' = \varinjlim_m ( \cD_x \widehat\otimes \cE^{\prime m}_y)$ in \cite[Prop.~1, p.~112]{MR0090777}, whereas we find, on the other hand, $(\cD'_x \widehat\otimes \cE_y)' = \varinjlim_m ( \cD_x \widehat\otimes \cE^{\prime m}_{c,y} )$ in \cite[Corollaire 1, p.~116]{MR0090777}. It can be shown that the isomorphism is also a topological one. Other representations of the dual of $\cD'(\cE)$ are given in:

\begin{proposition}[Dual of $\cD'(\cE))$]\label{prop2}
We have linear topological isomorphisms
\begin{align}
 \label{prop2.eq1} (\cD'(\cE))'_b & \cong \cD \widehat\otimes_\iota \cE' = \cD \widehat\otimes_\beta \cE' \\
\intertext{and linear isomorphisms}
\label{prop2.eq2}(\cD'(\cE))' & \cong \cD(\cE'; \e ) = \cD(\cE'; \beta) \\
\label{prop2.eq3}& \cong \cE'(\cD; \e) = \cE'(\cD; \beta) \\
\nonumber & \cong \Nu(\cD', \cE') = \Co ( \cD', \cE') \\
\nonumber & \cong \Nu ( \cE, \cD ) = \Co ( \cE, \cD).
\end{align}
\end{proposition}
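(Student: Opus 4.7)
The plan is to derive the topological isomorphism \eqref{prop2.eq1} directly from Grothendieck's corollary recalled in the introduction, and then to obtain the algebraic isomorphisms \eqref{prop2.eq2}, \eqref{prop2.eq3} and those involving $\Nu$ and $\Co$ from standard identifications of $\e$-products, tensor products, and spaces of operators that become available because all spaces in sight are nuclear.

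For \eqref{prop2.eq1} I would apply Grothendieck's corollary with $E \coleq \cD'$ and $F \coleq \cE$: $\cD'$ is complete and nuclear, $\cE$ is a reflexive Fr\'echet space and hence complete and semireflexive, and the completeness of $(\cD' \widehat\otimes \cE)'_b = (\cD'(\cE))'_b$ is furnished by Proposition~\ref{prop1}\ref{prop1.2}, since the strong dual of a bornological space is complete. This yields $(\cD'(\cE))'_b = \cD \widehat\otimes_\iota \cE'$. The equality $\cD \widehat\otimes_\iota \cE' = \cD \widehat\otimes_\beta \cE'$ is then obtained by writing $\cD = \varinjlim_K \cD_K$ as a strict inductive limit, noting that on each Fr\'echet factor $\cD_K \widehat\otimes \cE'$ the $\iota$- and $\beta$-topologies coincide, and passing to the inductive limit.

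For the linear isomorphisms I would exploit the nuclearity of $\cD'$, which gives $\cD' \widehat\otimes \cE = \cD' \widehat\otimes_\e \cE = \cD' \bine \cE$, so that $(\cD'(\cE))'$ identifies with the space of bi-equicontinuous bilinear forms on $\cD'_c \times \cE_c$, i.e.\ with $\cD \bine \cE' \eqcol \cD(\cE'; \e)$; commutativity of the $\e$-product gives $\cE'(\cD; \e)$. The general identity $E \bine F = \cL_\e(E'_c, F)$ then turns $\cD \bine \cE'$ into $\cL_\e((\cD')_c, \cE')$, which by Grothendieck's theorem \emph{every continuous linear operator from a complete nuclear space into a quasi-complete space is nuclear} coincides with $\Nu(\cD', \cE')$; this in turn equals $\Co(\cD', \cE')$ because every nuclear operator is compact and, conversely, every continuous (in particular every compact) operator from a nuclear space is nuclear. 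The isomorphism $\Nu(\cD', \cE') \cong \Nu(\cE, \cD)$ (and the corresponding one for $\Co$) follows by transposition, using the reflexivity of $\cD'$ and $\cE$ asserted in Proposition~\ref{prop1}\ref{prop1.3}. The $\e/\beta$ identifications finally rest on the fact that in each pairing one factor is barrelled.

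The main obstacle I anticipate is the $\iota = \beta$ identification (and, in parallel, the $\e = \beta$ identification in \eqref{prop2.eq2}): these equalities of topologies are not consequences of the general nuclearity machinery and have to be established by exploiting the concrete inductive-limit structure of $\cD$ together with the Fr\'echet-nuclear structure of $\cE$ and the DF-nuclear structure of $\cE'$, and one has to check that equality on the algebraic tensor product is preserved under completion and under passage to the $K$-inductive limit. The remaining identifications reduce to a bookkeeping exercise using the classical results of \cite{zbMATH03199982}.
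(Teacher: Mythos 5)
Your treatment of \eqref{prop2.eq1} matches the paper's: Grothendieck's corollary applied to $E=\cD'$, $F=\cE$, with completeness of the strong dual supplied by the ultrabornologicity from Proposition~\ref{prop1}\ref{prop1.2}. However, the step you single out as ``the main obstacle'' is not where the work lies: the paper disposes of $\cD \widehat\otimes_\iota \cE' = \cD \widehat\otimes_\beta \cE'$ (and of the $\e=\beta$ identifications) in one line, because $\iota$ and $\beta$ are respectively the finest locally convex topologies making the canonical bilinear map separately continuous and hypocontinuous with respect to bounded sets, and for \emph{barrelled} factors ($\cD$ and $\cE'$ here) separately continuous bilinear maps are automatically hypocontinuous, so the two topologies coincide on $E\otimes F$ and hence on the completions. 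Your proposed detour through $\cD=\varinjlim_K \cD_K$ and a passage of $\otimes_\beta$ to the inductive limit is unnecessary and would itself require justification ($\otimes_\beta$ does not obviously commute with strict inductive limits).

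There is also a genuine error in your justification of $\Nu(\cD',\cE')=\Co(\cD',\cE')$: the claim that \emph{every continuous linear operator from a complete nuclear space into a quasi-complete space is nuclear} is false --- the identity of any infinite-dimensional nuclear Fr\'echet space (e.g.\ $s$) is continuous but not nuclear, since nuclear operators map some zero-neighborhood into a bounded set while $s$ is not normable. Grothendieck's result (the Corollaire 4 the paper cites) concerns operators into \emph{Banach} spaces, equivalently continuous \emph{bilinear forms} on $E\times F$ with $E$ nuclear; it is because a compact operator factors through a Banach space that compactness forces nuclearity here, and the converse inclusion $\Nu\subseteq\Co$ uses the semi-Montel property of the target ($\cD$, $\cE'$), which your argument does not invoke. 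The remaining identifications \eqref{prop2.eq2}--\eqref{prop2.eq3} are, in the paper, simply read off from Schwartz's Proposition~22 rather than re-derived; your sketch of them is in the right spirit but rests on the flawed operator-theoretic claim above.
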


For the notation $\cD(\cE'; \e)$ or $\cD(\cE'; \beta)$ see \cite[p.~54]{zbMATH03145499}.

\begin{proof}
The first isomorphism of \eqref{prop2.eq1} results from the Corollary cited in the introduction: the 5 hypotheses are fulfilled due to Proposition \ref{prop1} \ref{prop1.2}. The equality $E \otimes_\beta F = E \otimes_\iota F$ is a consequence of the barrelledness of $E$ and $F$, in the case above: $E = \cD$, $F = \cE'$.

The isomorphisms \eqref{prop2.eq2} and \eqref{prop2.eq3} follow by \cite[Proposition 22, p.~103]{zbMATH03145499}.

The coincidence of nuclear and compact linear operators is a consequence of \cite[Chap.~II, \S 2, n°1, Corollaire 4, 1°, p.~39]{zbMATH03199982} and the semi-Montel property of $\cD$ and $\cE'$.
\end{proof}

\begin{proposition}[Existence and uniqueness of the scalar product]\label{prop3}
There is one and only one scalar product
\[
  \genfrac{}{}{0pt}{0}{ \langle \ , \ \rangle_x}{\langle \ , \ \rangle_y}
  \colon ( \cE'_x \widehat\otimes_\beta \cD_y ) \times ( \cE_x \widehat\otimes \cD'_y ) \to \bC
\]
which is partially continuous and coincides on $(\cE'_x \otimes \cD_y) \times (\cE_x \otimes \cD'_y)$ with the product
$ {}_{\cE_x} \langle \ , \ \rangle_{\cE_x'} \cdot {}_{\cD_y} \langle \ , \ \rangle_{\cD'_y}$, i.e., for decomposed elements $(S(x) \otimes \varphi(y), \psi(x) \otimes T(y))$ we have
\[ \genfrac{}{}{0pt}{0}{ \langle \ , \ \rangle_x}{\langle \ , \ \rangle_y} \bigl(S(x) \otimes \varphi(y), \psi(x) \otimes T(y)\bigr) = {}_{\cE_x} \bigl\langle \psi(x), S(x) \bigr\rangle_{\cE'_x} \cdot {}_{\cD_y} \bigl\langle \varphi(y), T(y) \bigr\rangle_{\cD'_y}. \]
\end{proposition}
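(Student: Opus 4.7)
The idea is to transport the canonical duality pairing obtained in Proposition \ref{prop2} to the present formulation. By Proposition \ref{prop2}, $\eqref{prop2.eq1}$, applied after the commutativity of $\widehat\otimes$ in its entries, we have a linear topological isomorphism
\[
  \cE'_x \widehat\otimes_\beta \cD_y \;\cong\; \bigl(\cE_x \widehat\otimes \cD'_y\bigr)'_b,
\]
so the canonical duality bracket on the right-hand side pulls back to a bilinear form on $(\cE'_x \widehat\otimes_\beta \cD_y) \times (\cE_x \widehat\otimes \cD'_y)$. Take this pullback as the definition of the scalar product. To check that it has the required value on decomposed elements, I would trace through the construction of the Grothendieck--Schwartz isomorphism: on $\cE'_x \otimes \cD_y$ it is the restriction of $S \otimes \varphi \mapsto \bigl[\psi \otimes T \mapsto \langle \psi, S\rangle \langle \varphi, T\rangle\bigr]$, extended by continuity, so for decomposed arguments one recovers exactly
\[
  {}_{\cE_x}\bigl\langle \psi(x), S(x) \bigr\rangle_{\cE'_x} \cdot {}_{\cD_y}\bigl\langle \varphi(y), T(y) \bigr\rangle_{\cD'_y}.
\]

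For the separate continuity, fix $K \in \cE_x \widehat\otimes \cD'_y$. Then $L \mapsto \langle L, K\rangle$ is, by definition of the topology on the strong dual, continuous for the strong dual topology and hence for any finer topology on $\cE'_x \widehat\otimes_\beta \cD_y$; in particular it is continuous there. Conversely, fixing $L \in \cE'_x \widehat\otimes_\beta \cD_y \cong (\cE_x \widehat\otimes \cD'_y)'_b$, the map $K \mapsto \langle L, K\rangle$ is just evaluation of $L$ on $K$, hence continuous on $\cE_x \widehat\otimes \cD'_y$. This establishes existence.

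For uniqueness, let $B_1, B_2$ be two scalar products of the asserted type and set $B := B_1 - B_2$; it is separately continuous and vanishes on $(\cE'_x \otimes \cD_y) \times (\cE_x \otimes \cD'_y)$. For fixed $u \in \cE'_x \otimes \cD_y$, $B(u, \cdot)$ is continuous and vanishes on the dense subspace $\cE_x \otimes \cD'_y \subset \cE_x \widehat\otimes \cD'_y$, hence is identically zero. Then for each fixed $K \in \cE_x \widehat\otimes \cD'_y$, $B(\cdot, K)$ is continuous on $\cE'_x \widehat\otimes_\beta \cD_y$ and vanishes on the dense subspace $\cE'_x \otimes \cD_y$, hence vanishes identically, giving $B_1 = B_2$.

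The only step requiring genuine care is the verification of the value on decomposed elements, since it relies on unwinding the identifications from Proposition \ref{prop2}; everything else is a formal density-plus-separate-continuity argument together with an appeal to the already-established duality. I do not anticipate any other serious obstacle.
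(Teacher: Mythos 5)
Your argument is correct, and it is close in spirit to the paper's \emph{first} proof (the one via the strong dual of $\cD'(\cE)$), but you execute it differently. The paper offers three proofs: (a) via Schwartz's explicit representation of elements of $(\cD'(\cE))'_b = \cD\widehat\otimes_\iota\cE'$ as finite sums of derivatives, (b) via the ``Th\'eor\`emes de croisement'' \cite[Proposition 2, p.~18]{zbMATH03145499}, which deliver existence, uniqueness and partial continuity in a single stroke, and (c) via composition of a vectorial scalar product with a scalar one. You instead pull back the evaluation pairing through the topological isomorphism $\cE'_x\widehat\otimes_\beta\cD_y \cong (\cE_x\widehat\otimes\cD'_y)'_b$ of Proposition~\ref{prop2} and settle uniqueness by a double density-plus-separate-continuity argument. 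This is more elementary and self-contained than (a) and (b): it avoids both the representation by derivatives and the croisement machinery, using only the duality already established. The one step you defer --- that the isomorphism of Proposition~\ref{prop2}, restricted to the algebraic tensor product $\cE'_x\otimes\cD_y$, is the canonical injection $e'\otimes f'\mapsto[(e,f)\mapsto\langle e,e'\rangle\langle f,f'\rangle]$ --- is indeed true by the very construction of the Grothendieck map (see the proof of Proposition~\ref{prop10}\ref{prop9.1}--\ref{prop9.2}), so no gap remains, but for a complete write-up you should spell out that verification rather than merely announcing that you ``would trace through'' it. A cosmetic remark: the topology on $\cE'_x\widehat\otimes_\beta\cD_y$ is not merely finer than the strong dual topology, it \emph{equals} it under the isomorphism of Proposition~\ref{prop2}, so the continuity in the first variable is immediate.
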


\begin{proof}
 A \emph{first} proof is given in \cite[Proposition 1, p.~112]{MR0090777}, by means of the explicit representation of the elements of the strong dual $\cD \widehat\otimes_\iota \cE'$ of $\cD'(\cE)$ hinted at before Proposition \ref{prop2}. The uniqueness can be presumed because L.~Schwartz uses the word ``\emph{le} produit scalaire''.

A \emph{second} proof consists in applying the ``Th\'eor\`emes de croisement'', i.e., \cite[Proposition 2, p.~18]{zbMATH03145499}: existence, uniqueness and partial continuity of the scalar product follow.

A \emph{third} proof follows by composition of the vectorial scalar product
\[ \langle\ ,\ \rangle \colon \cE_x ( \cD'_y ) \times (\cE'_x \widehat\otimes_\beta \cD_y) \to \cD'_y \widehat\otimes_\iota \cD_y = \cD'_y \widehat\otimes_\beta \cD_y \]
\cite[Proposition 10, p.~57]{zbMATH03145499} with the scalar product
\[ \langle\ ,\ \rangle_y \colon \cD'_y \widehat\otimes_\iota \cD_y \to \bC.  \qedhere\]
\end{proof}

\section{\texorpdfstring{``Semi-regular vanishing''}{'Semi-regular vanishing'} distributions}\label{sec2}

\begin{proposition}[Properties of $\cD' \widehat\otimes \dot\cB$ and $\cD' \widehat\otimes \cB_c$]\label{prop4}
The space of ``semi-regular vanishing'' distributions $\cD'(\dot\cB) = \cD' \widehat\otimes \dot\cB$ is ultrabornological but not semi-reflexive. $\cD'(\cB_c) = \cD' \widehat\otimes \cB_c$ is semireflexive but not bornological.
\end{proposition}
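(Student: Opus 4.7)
The plan is to prove each of the four assertions separately. The two negative claims --- non-semi-reflexivity of $\cD'(\dot\cB)$ and non-bornology of $\cD'(\cB_c)$ --- will be reduced to the corresponding deficiencies of the factors $\dot\cB$ and $\cB_c$ themselves by a complementation argument, while the two positive claims follow by tensor-product permanence theorems in the spirit of Proposition~\ref{prop1}.

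For the two negative statements, fix $u \in \cD'$ and $\psi \in \cD$ with $\langle u, \psi\rangle = 1$. For $F \in \{\dot\cB, \cB_c\}$, the maps $\iota_F \colon F \to \cD' \widehat\otimes F$, $f \mapsto u \otimes f$, and $\pi_F \colon \cD' \widehat\otimes F \to F$, $v \otimes f \mapsto \langle \psi, v\rangle f$, are continuous and satisfy $\pi_F \circ \iota_F = \Id_F$, exhibiting $F$ as a closed topologically complemented subspace of $\cD' \widehat\otimes F$. Since a closed subspace of a semi-reflexive space is semi-reflexive, and a quotient of a bornological space is bornological, it now suffices to recall that $\dot\cB$ is not semi-reflexive (indeed $(\dot\cB)'_b = \cD'_{L^1}$, hence $(\dot\cB)''_b = (\cD'_{L^1})'_b = \cB \supsetneq \dot\cB$) and that $\cB_c$ is not bornological (a classical fact; cf.\ the discussion around \cite[p.~203]{TD}).

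For the semi-reflexivity of $\cD'(\cB_c)$ I apply Grothendieck's permanence theorem \cite[Chap.~II, \S 3, n°2, Prop.~13, p.~76]{zbMATH03199982}, exactly as in Proposition~\ref{prop1}\ref{prop1.3}: $\cD'$ is complete, nuclear and semi-reflexive; $\cB_c$ is complete and semi-reflexive \cite[p.~203]{TD}; hence their completed tensor product is semi-reflexive.

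For the ultrabornology of $\cD'(\dot\cB)$ I adapt the sequence-space strategy of Proposition~\ref{prop1}\ref{prop1.2}. Combining $\cD' \cong (s')^\bN$ with a Valdivia-type sequence-space representation of $\dot\cB$ (expected to take the form $\dot\cB \cong c_0 \widehat\otimes s$), Valdivia's identity $\cO_M \cong s' \widehat\otimes s$, and the commutation of $\widehat\otimes$ with countable products in the presence of a nuclear Fréchet factor, one should arrive at
\[ \cD' \widehat\otimes \dot\cB \cong (c_0 \widehat\otimes \cO_M)^\bN. \]
Since $\cO_M$ is ultrabornological by Proposition~\ref{prop1}\ref{prop1.2}, and tensoring with the Banach space $c_0$ preserves ultrabornology through a compatible $(LB)$-representation, each factor $c_0 \widehat\otimes \cO_M$ is ultrabornological, and a countable product of ultrabornological spaces is ultrabornological by the same references as in Proposition~\ref{prop1}\ref{prop1.2}. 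The principal obstacle lies in establishing the precise sequence-space representation of $\dot\cB$ and in justifying the commutation of the completed tensor product with the $\bN$-product, both of which require careful use of nuclearity-based permanence results.
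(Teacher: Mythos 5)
Your complementation argument for the two negative claims is correct and is a genuinely different, more economical route than the paper's: the paper deduces the non-semireflexivity of $\cD'\widehat\otimes\dot\cB$ from Grothendieck's permanence result for semireflexivity of tensor products, and proves the non-bornologicity of $\cD'\widehat\otimes\cB_c$ by exhibiting an explicit locally bounded but discontinuous linear form $K(u)=T(u(\varphi_0))$ --- which is exactly your quotient map $\pi_{\cB_c}$ composed with a witness $T$ for the non-bornologicity of $\cB_c$, so your version packages the same idea abstractly and handles both negative claims at once. Your proof of the semireflexivity of $\cD'\widehat\otimes\cB_c$ coincides with the paper's.

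The genuine gap is in the ultrabornologicity of $\cD'\widehat\otimes\dot\cB$, which is the hardest of the four assertions. The representation $\dot\cB\cong c_0\widehat\otimes s$ that you only ``expect'' is indeed available (Vogt, cited in the paper), and the commutation of $\widehat\otimes_\pi$ with the countable product is covered by the same Grothendieck reference already used in Proposition \ref{prop1}; the paper thereby arrives at $\cD'\widehat\otimes\dot\cB\cong\cO_M^{\bN}\widehat\otimes c_0$. But your decisive step --- ``tensoring with the Banach space $c_0$ preserves ultrabornology through a compatible $(LB)$-representation'' --- is not a theorem, and it is precisely where the substance of the proof lies: bornologicity of a completed projective tensor product with a Banach space is a delicate permanence question (it sits in the orbit of Grothendieck's probl\`eme des topologies) and does not follow from ultrabornologicity of one factor alone. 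The paper closes exactly this step by invoking the result of Shiraishi and Hirata (Proposition 2, p.~75 of their paper), whose hypotheses require the first factor to be ultrabornological, nuclear, \emph{and to have nuclear strong dual}; the nuclearity of $\cO_M^{\prime(\bN)}$ is checked there via Grothendieck's Th\'eor\`eme 9. Without this (or an equivalent) input, your argument for the ultrabornologicity does not go through; the rest of your proposal is sound.
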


\begin{proof}
 The non-semireflexivity and the semireflexivity, respectively, are consequences of Grothendieck's permanence result \cite[Chap.~II, \S 3, n°2, Proposition 13 e., p.~76]{zbMATH03199982} due to the corresponding properties of $\dot{\cB}$ and $\cB_c$, respectively.

The sequence space representations
\[ \dot\cB \cong c_0 \widehat\otimes s, \qquad \cO_M \cong s \widehat\otimes s',\qquad \cD' \cong {s'}^{\bN} \]
(see \cite[Theorem 3.2, p.~415; Theorem 5.3, p.~427]{zbMATH03822460} and \cite[Theorem 3, p.~478]{zbMATH03740168})
show that
\[ \cD' \widehat\otimes \dot\cB \cong {s'}^{\bN} \widehat\otimes ( s \widehat\otimes c_0 ) \cong \cO_M^\bN \widehat \otimes c_0. \]
$\cO_M^\bN$ is ultrabornological (as seen in the proof of Proposition \ref{prop1} \ref{prop1.2}), nuclear, and by \cite[Chap.~II, \S 2, n°2, Th\'eor\`eme 9, 2°, p.~47]{zbMATH03199982} also its dual $\cO_M^{\prime(\bN)}$ is nuclear, such that the bornologicity of $\cO_M^\bN \widehat\otimes c_0$ follows by \cite[Proposition 2, p.~75]{zbMATH03312960}.

To show that $\cD'(\cB_c)$ is not bornological we have to find a linear form $K$ in $(\cD' \widehat\otimes \cB_c)^*$, the algebraic dual of $\cD' \widehat\otimes \cB_c$, which is locally bounded (i.e., it maps bounded subsets of $\cD' \widehat\otimes \cB_c$ into bounded sets of complex numbers) but not continuous. Because $\cB_c$ is not bornological there exists $T \in (\cB_c)^*$ which is locally bounded but such that $T \not\in (\cB_c)'$. Fixing any $\varphi_0 \in \cD$ with $\varphi_0(0)=1$, we define $K$ by $K(u) \coleq T ( u ( \varphi_0 ))$ for $u \in \cL(\cD, \cB_c)$. Then $K$ is locally bounded but not continuous. In fact, taking a net $(f_\nu)_\nu \to 0$ in $\cB_c$ such that $T(f_\nu)$ does not converge to zero, we define a net $(u_\nu)_\nu$ in $\cL_b(\cD, \cB_c)$ by $u_\nu(\varphi) \coleq \varphi(0) f_\nu$. Then $u_\nu \to 0$, but $K(u) = T ( u_\nu ( \varphi_0 ) ) = T(f_\nu)$ does not converge to zero.
\end{proof}

Analogously to the explicit description of the elements in $(\cD'(\cE))'_b$, cited before Proposition \ref{prop2}, let us represent the elements of $(\cD'(\dot\cB))'$:

\begingroup
\def\thetheorem{\ref{prop5}$'$}
\addtocounter{theorem}{-1}
\begin{proposition}[Dual of $\cD'(\dot\cB)$]\label{prop5bis}
If $K(x,y) \in \cD'_{xy}$ we have the characterization
\begin{align*}
 K(x,y) \in ( \cD'_x ( \dot\cB_y))' \Longleftrightarrow\ & \exists m \in \bN_0\ \exists g_\alpha(x,y) \in \cD_x \widehat\otimes L_{y}^1,\\
& \abso{\alpha} \le m, \alpha \in \bN_0^n, \textrm{ such that }\\
& K(x,y) = \sum_{\abso{\alpha} \le m} \pd_y^\alpha g_\alpha (x,y),
\end{align*}
i.e., $(\cD'(\dot\cB))' = \varinjlim_m ( \cD \widehat\otimes \cD^{\prime m}_{L^1})$ algebraically.

Furthermore, $(\cD' \widehat\otimes \dot\cB)' = (\cD' \widehat\otimes \cB_c)'$ algebraically.
\end{proposition}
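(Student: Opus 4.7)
The strategy is to first establish the algebraic equality $(\cD'\widehat\otimes\dot\cB)' = (\cD'\widehat\otimes\cB_c)'$ (the second assertion), then apply the generalization of Grothendieck's corollary from Section~\ref{sec3} to identify the common value $(\cD'(\cB_c))'_b \cong \cD\widehat\otimes_\iota\cD'_{L^1}$, and finally to extract the derivative representation via the Schwartz structure theorem for $\cD'_{L^1}$.

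The algebraic equality is essentially automatic: for $F\in\{\dot\cB,\cB_c\}$, the dual $(\cD'\widehat\otimes_\pi F)'$ is the space of continuous bilinear forms on $\cD'\times F$, which correspond canonically to continuous linear maps $\cD'\to F'_b$. Both $\dot\cB$ and $\cB_c$ have the same strong dual $\cD'_{L^1}$ as topological vector spaces (this is precisely the identification underlying the definition of $\cB_c$ as $\cD_{L^\infty}$ with the topology of uniform convergence on compact subsets of $\cD'_{L^1}$), so both duals reduce to the common space $\cL(\cD',\cD'_{L^1})$.

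Next, the generalization of Grothendieck's corollary from Section~\ref{sec3} is applied with $E=\cD'$ (nuclear, complete) and $F=\cB_c$ (semireflexive, complete); the required completeness of $(\cD'\widehat\otimes\cB_c)'_b$ is handled by the modification using the auxiliary space $F_0=\dot\cB$, for which $(\cD'\widehat\otimes\dot\cB)'_b$ is complete by the ultrabornologicity established in Proposition~\ref{prop4}. This yields $(\cD'(\cB_c))'_b = \cD\widehat\otimes_\iota\cD'_{L^1}$. The Schwartz structure theorem gives $\cD'_{L^1}=\varinjlim_m \cD^{\prime m}_{L^1}$ with $\cD^{\prime m}_{L^1}:=\{\sum_{|\alpha|\le m}\partial^\alpha f_\alpha : f_\alpha\in L^1\}$; commuting the $\iota$-tensor product with this inductive limit gives $\cD\widehat\otimes_\iota\cD'_{L^1} = \varinjlim_m(\cD\widehat\otimes\cD^{\prime m}_{L^1})$ algebraically, and the representation $K(x,y)=\sum_{|\alpha|\le m}\partial_y^\alpha g_\alpha(x,y)$ with $g_\alpha\in\cD_x\widehat\otimes L^1_y$ then follows from the surjection $\bigoplus_{|\alpha|\le m}(\cD_x\widehat\otimes L^1_y)\twoheadrightarrow \cD_x\widehat\otimes\cD^{\prime m}_{L^1,y}$ determined by $(g_\alpha)\mapsto\sum_\alpha\partial_y^\alpha g_\alpha$, whose surjectivity uses the nuclearity of $\cD$.

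The main obstacle I anticipate is the last step: justifying the commutation of $\widehat\otimes_\iota$ with the inductive limit in $m$ and the surjectivity of the differentiation--summation map onto $\cD\widehat\otimes\cD^{\prime m}_{L^1}$. This requires careful use of the quotient presentation of $\cD^{\prime m}_{L^1}$ as $\bigoplus_{|\alpha|\le m}L^1$ modulo the kernel of differentiation, together with the preservation of direct sums and quotients by $\cD\widehat\otimes(\cdot)$, which in turn rests on the nuclearity of $\cD$.
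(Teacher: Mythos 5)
There are two genuine gaps, and together they make your argument circular.

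First, the equality $(\cD'\widehat\otimes\dot\cB)'=(\cD'\widehat\otimes\cB_c)'$ is not automatic. Joint continuity of a bilinear form on $\cD'\times F$ depends on the topology of $F$, not merely on the underlying dual pair: what matters is which subsets of $F'=\cD'_{L^1}$ are equicontinuous. For $\dot\cB$ these are the bounded subsets of $\cD'_{L^1}$; for $\cB_c$ they are only those contained in closed absolutely convex hulls of compact subsets of $\cD'_{L^1}$. So the inclusion $(\cD'\widehat\otimes\cB_c)'\subseteq(\cD'\widehat\otimes\dot\cB)'$ is the obvious one, while the inclusion you actually need requires showing that the image of a $0$-neighborhood of $\cD'$ under the operator associated with $K$ is equicontinuous on $\cB_c$, not just bounded in $\cD'_{L^1}$. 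The paper does this concretely: from the nuclear representation it forms $f_\alpha=\sum_\nu\abso{\lambda_\nu}\,\abso{f_{\nu,\alpha}}\in L^1$, chooses $g_\alpha\in\cC_0$, $g_\alpha>0$, with $f_\alpha/g_\alpha\in L^1$, and estimates $\abso{\langle\langle K,S\rangle,f\rangle}$ by $\sum_\alpha\norm{g_\alpha\,\pd^\alpha f}_\infty\int f_\alpha/g_\alpha$. Nothing of this sort appears in your proposal, and since Proposition \ref{prop9} takes the equality $(\cH\widehat\otimes F)'_b=(\cH\widehat\otimes F_0)'_b$ as a \emph{hypothesis}, you cannot invoke it to produce that equality.

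Second, the step you yourself flag as the main obstacle is precisely where the content of the proposition lies, and it is left unresolved. That every $K\in(\cD'(\dot\cB))'$ admits a representation of some \emph{finite} order $m$ is exactly the assertion that $\cD\widehat\otimes_\iota\cD'_{L^1}=\varinjlim_m(\cD\widehat\otimes\cD^{\prime m}_{L^1})$ algebraically, and there is no general permanence principle letting $\widehat\otimes$ commute with this countable inductive limit. The paper instead identifies $(\cD'(\dot\cB))'$ with $\Nu(\cD',\cD'_{L^1})$, writes a nuclear operator as $\sum_\nu\lambda_\nu\langle\varphi_\nu,\cdot\rangle\,T_\nu$ with $(T_\nu)$ bounded in $\cD'_{L^1}$, and applies the structure theorem for \emph{bounded} subsets of $\cD'_{L^1}$ (a single $m$ and uniformly $L^1$-bounded $f_{\nu,\alpha}$ with $T_\nu=\sum_{\abso{\alpha}\le m}\pd^\alpha f_{\nu,\alpha}$) to obtain the uniform order; the converse direction then needs its own argument (decomposition of $g_\alpha$ in $\cE\widehat\otimes L^1$ plus a cutoff) to land back in $\Nu(\cD',\cD'_{L^1})$. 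I recommend restructuring your proof around the nuclear representation rather than around Proposition \ref{prop9}, whose application here presupposes what is to be proved.
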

\endgroup
$\cD^{\prime m}_{L^1}$ is the strong dual of the Banach space $\dot\cB^m$ \cite[p.~99]{FDVV}. Note that $\cD \widehat\otimes E = \cD \widehat\otimes_\iota E$ for a Banach space $E$ because separately continuous bilinear forms on $\cD \times E$ are continuous.

\begin{proof}[Proof of Proposition \ref{prop5}$'$]
The following proof is a copy of the proof of \cite[Proposition 1, p.~112]{MR0090777}.

``$\Longrightarrow$'': If $K(x,y) \in ( \cD'_x ( \dot\cB_y ))' = \Nu ( \cD'_x, \cD'_{L^1,y})$ (for the equality, see \cite[Prop. 22, p.~103]{zbMATH03145499} and \cite[Chap.~II, \S 2, n°1, Corollaire 4 1., p.~39]{zbMATH03199982}) there exist a bounded sequence $(\varphi_\nu)_{\nu \in \bN}$ in $\cD_x$, a bounded sequence $(T_\nu)_{\nu \in \bN}$ in $\cD'_{L^1,y}$ and a sequence $(\lambda_\nu)_{\nu \in \bN} \in \ell^1$ such that we have for $S \in \cD'_x$
\[ \langle K(x,y), S(x) \rangle = \sum_\nu \lambda_\nu \langle \varphi_\nu(x), S(x) \rangle T_\nu(y). \]
The boundedness of $(T_\nu)_{\nu \in \bN}$ implies
\begin{multline*}
 \exists m \in \bN_0\ \exists D>0\ \exists f_{\nu,\alpha} \in L^1,\nu\ \in \bN,\alpha \in \bN_0^n, \abso{\alpha} \le m,\\
\textrm{with }\norm{f_{\nu,\alpha}}_1 \le D\textrm{ such that } T_\nu = \sum_{\abso{\alpha} \le m} \pd^\alpha f_{\nu, \alpha}
\end{multline*}
\cite[Remarque 2°, p.~202]{TD} and, hence,
\begin{equation}\label{eqblah}
\langle K(x,y), S(x) \rangle = \sum_{\abso{\alpha} \le m} \pd_y^\alpha \sum_\nu \lambda_\nu \langle \varphi_\nu, S \rangle f_{\nu, \alpha}(y)
\end{equation}
and $K = \sum_{\abso{\alpha} \le m} \pd_y^\alpha g_\alpha$ if we set $g_\alpha (x,y) \coleq \sum_\nu \lambda_\nu \varphi_\nu(x) \cdot f_{\nu, \alpha}(y)$.
In order to see that $g_\alpha(x,y) \in \cD_x \widehat\otimes L^1_y$ it suffices to show that for $S(x) \in \cD'_x$ the sequence of partial sums
\[ \sum_{\nu=1}^N \langle \varphi_\nu(x), S(x) \rangle f_{\nu, \alpha}(y) ,\quad N \in \bN, \]
converges in $L^1$ and that \[ \cD'_x \to L^1_y,\quad S(x) \mapsto \sum_{\nu=1}^\infty \lambda_\nu \langle \varphi_\nu(x), S(x) \rangle f_{\nu, \alpha}(y) \] maps bounded sets of $\cD'$ into bounded sets of $L^1$. This follows from the boundedness of $(\varphi_\nu)$ in $\cD$.

``$\Longleftarrow$'': If $K(x,y) \in \cD'_{xy}$ has the representation
\[ K(x,y) = \sum_{\abso{\alpha} \le m} \pd_y^\alpha g_\alpha(x,y),\quad g_\alpha(x,y) \in \cD_x \widehat\otimes L^1_y, \]
then also $g_\alpha(x,y) \in \cE_x \widehat\otimes L^1_y$.

By \cite[Chap.~I, \S 2, n°1, Th\'eor\`eme 1, 1°, p.~51]{zbMATH03199982} there exists $(\lambda_{\nu, \alpha})_\nu \in \ell^1$, a bounded sequence $(e_{\nu,\alpha})_\nu$ in $\cE_x$ and a bounded sequence $(f_{\nu,\alpha})_\nu \in L^1$ such that
\[ g_\alpha (x,y) = \sum_{\nu} \lambda_\nu f_{\nu,\alpha}(y) e_{\nu_\alpha}(x). \] The compactness of the supports of $g_\alpha$ with respect to $x$ \cite[p.~62]{zbMATH03145498} implies the existence of a function $\phi \in \cD_x$ such that
\[ g_\alpha(x,y) = \phi(x) \cdot g_\alpha(x,y),\ \abso{\alpha} \le m. \]
Thus,
\[ g_\alpha(x,y) = \sum_\nu \lambda_\nu e_{\nu,\alpha} (x) \phi(x) \otimes f_{\nu,\alpha}(y) \]
and
\[ K(x,y) = \sum_\nu \sum_{\abso{\alpha} \le m} \lambda_\nu e_{\nu,\alpha}(x) \phi(x) \otimes \pd_y^\alpha f_{\nu,\alpha}(y). \]
Because $(e_{\nu_\alpha}(x)\phi(x))_{\nu,\alpha}$ is bounded (and hence equicontinuous) in $\cD_x$ and $(\pd_y^\alpha f_{\nu,\alpha}(y))_{\abso{\alpha} \le m; \nu}$ is an equicontinuous (i.e., bounded) subset of $\cD'_{L^1,y}$ this proves that $K \in \cN ( \cD'_x, \cD'_{L^1,y})$.

In order to see that $(\cD' \widehat\otimes \dot\cB)' \subseteq (\cD' \widehat\otimes \cB_c)'$ (the converse inclusion is obvious) we return to equality \eqref{eqblah}. Then for each $f_\alpha \coleq \sum_\nu \abso{\lambda_\nu} \cdot \abso{f_{\nu,\alpha}} \in L^1$ there exists by \cite[Prop.~1.2.1, p.~6]{zbMATH06308371} a function $g_\alpha \in \cC_0$, $g_\alpha > 0$ such that $f_\alpha / g_\alpha \in L^1$. Then for any element $S$ of the polar $U \coleq \{ \varphi_\nu \}^\circ$, which is a $0$-neighborhood in $\cD'$, and any $f \in \dot\cB$ we see that
\begin{align*}
 \langle \langle K(x,y), S(x) \rangle, f(y) \rangle &= \langle \sum_{\abso{\alpha} \le m} \pd_y^\alpha \sum_\nu \lambda_\nu \langle \varphi_\nu, S \rangle f_{\nu, \alpha}(y), f(y) \rangle \\
&= \sum_{\abso{\alpha} \le m} (-1)^{\abso{\alpha}} \langle \sum_\nu \lambda_\nu \langle \varphi_\nu, S \rangle f_{\nu,\alpha}(y), \pd^\alpha f(y) \rangle \\
&= \sum_{\abso{\alpha} \le m} (-1)^{\abso{\alpha}} \int \sum_\nu \lambda_\nu \langle \varphi_\nu, S \rangle f_{\nu, \alpha}(y) \pd^\alpha f(y) \,\ud y
\end{align*}
and thus
\begin{align*}
\abso{ \langle \langle K(x,y), S(x) \rangle, f(y) \rangle } & \le \sum_{\abso{\alpha} \le m} \int \abso{g_\alpha(y) \pd^\alpha f(y)} \frac{ f_\alpha(y) }{g_\alpha(y)} \,\ud y \\
& \le \sum_{\abso{\alpha} \le m} \norm { g_\alpha \cdot \pd^\alpha f}_\infty \int \frac{f_\alpha(y)}{g_\alpha(y)} \,\ud y.
\end{align*}
Hence, $\{ \langle K(x,y), S(x) \rangle : S \in U \} \subseteq (\cB_c)'$ is equicontinuous, which implies the claim.
\end{proof}

\begin{proposition}[Dual of $\cD' \widehat\otimes \dot\cB$]\label{prop5}
We have linear topological isomorphisms
 \begin{align}
\label{prop5.eq1}(\cD' \widehat\otimes \cB_c)'_b & \cong \cD \widehat\otimes_\iota \cD'_{L^1} = \cD \widehat\otimes_\beta \cD'_{L^1} \\
& \cong (\cD' \widehat\otimes \dot{\cB})'_b \nonumber \\
\intertext{and linear isomorphisms}
  \nonumber (\cD' \widehat\otimes \dot\cB)' & \cong \cD ( \cD'_{L^1}; \e) = \cD(\cD'_{L^1}; \beta) \\
\nonumber  & = \Nu ( \cD', \cD'_{L^1}) \\
\nonumber  & \cong \cD'_{L^1, c} ( \cD; \e) = \cD'_{L^1, c} ( \cD; \beta).
 \end{align}
\end{proposition}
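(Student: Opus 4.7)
The plan is to apply the generalization and modification of Grothendieck's corollary (to be proved in Section \ref{sec3}) to the pair $E = \cD'$, $F = \cB_c$ with auxiliary space $F_0 = \dot\cB$; once the isomorphism $(\cD' \widehat\otimes \cB_c)'_b \cong \cD \widehat\otimes_\iota \cD'_{L^1}$ has been secured, the remaining statements reduce to standard identifications entirely parallel to those used in the proof of Proposition \ref{prop2}.

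First I would verify the hypotheses required by the intended Section \ref{sec3} result. The space $\cD'$ is complete and nuclear; $\cB_c$ is complete and, by Proposition \ref{prop4}, semireflexive; the algebraic identity $(\cD' \widehat\otimes \dot\cB)' = (\cD' \widehat\otimes \cB_c)'$ is the content of Proposition \ref{prop5bis}; and $(\cD' \widehat\otimes \dot\cB)'_b$ is complete because $\cD' \widehat\otimes \dot\cB$ is ultrabornological (Proposition \ref{prop4}) and the strong dual of any bornological space is complete. The Section \ref{sec3} result then delivers simultaneously
\[ (\cD' \widehat\otimes \cB_c)'_b \cong (\cD')'_b \widehat\otimes_\iota (\cB_c)'_b \cong \cD \widehat\otimes_\iota \cD'_{L^1} \]
and the topological identification $(\cD' \widehat\otimes \dot\cB)'_b \cong (\cD' \widehat\otimes \cB_c)'_b$. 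The coincidence $\cD \widehat\otimes_\iota \cD'_{L^1} = \cD \widehat\otimes_\beta \cD'_{L^1}$ then follows, as in Proposition \ref{prop2}, from the barrelledness of both factors.

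Second, for the remaining purely linear isomorphisms I would invoke \cite[Proposition 22, p.~103]{zbMATH03145499}, exactly as in Proposition \ref{prop2}: it expresses $(\cD' \widehat\otimes \dot\cB)'$ once as $\cD(\cD'_{L^1}; \e)$ and once as $\cD'_{L^1,c}(\cD; \e)$, with the $\e$- and $\beta$-topologies coinciding in both cases by barrelledness. The identification with $\Nu(\cD', \cD'_{L^1})$ is then a direct consequence of the nuclearity of $\cD'$ combined with \cite[Chap.~II, \S 2, n°1, Corollaire 4, p.~39]{zbMATH03199982}, which says that every continuous linear mapping from a nuclear space into any locally convex space is nuclear.

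The main obstacle is therefore packaged into Section \ref{sec3}: one has to produce a version of Grothendieck's corollary in which the joint assumption ``$F$ semireflexive and $(E \widehat\otimes F)'_b$ complete'' is replaced by the existence of an auxiliary $F_0$ (here $\dot\cB$) for which $(E \widehat\otimes F_0)'_b$ is complete and the algebraic identity $(E \widehat\otimes F)' = (E \widehat\otimes F_0)'$ lifts to a topological identity of the strong duals. Once this modification is in hand, verifying its hypotheses for the concrete triple $(\cD', \cB_c, \dot\cB)$ is essentially a bookkeeping matter that only draws on Propositions \ref{prop4} and \ref{prop5bis}.
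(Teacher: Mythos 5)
Your overall strategy coincides with the paper's: reduce $(\cD' \widehat\otimes \dot\cB)'_b$ to $(\cD' \widehat\otimes \cB_c)'_b$, apply the Section~\ref{sec3} modification of Grothendieck's corollary with $E = \cD'$, $F = \cB_c$, $F_0 = \dot\cB$, and obtain the remaining linear isomorphisms exactly as in Proposition~\ref{prop2}. The verification of nuclearity and completeness of $\cD'$, completeness and semireflexivity of $\cB_c$, and completeness of $(\cD' \widehat\otimes \dot\cB)'_b$ via the ultrabornologicity from Proposition~\ref{prop4} is also as in the paper.

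There is, however, a genuine gap in how you handle the identification $(\cD' \widehat\otimes \dot\cB)'_b \cong (\cD' \widehat\otimes \cB_c)'_b$. The modification of Grothendieck's corollary (Proposition~\ref{prop9}) takes the \emph{topological} equality $(\cH \widehat\otimes F)'_b = (\cH \widehat\otimes F_0)'_b$ of the strong duals as a \emph{hypothesis}; it does not produce it. You verify only the algebraic coincidence of the duals (Proposition~\ref{prop5bis}) and then assert that the Section~\ref{sec3} result ``delivers'' the topological identification --- this is circular. Since $\cD' \widehat\otimes \dot\cB$ embeds continuously into the strictly larger space $\cD' \widehat\otimes \cB_c$, the strong topology induced on the common dual by the latter is a priori finer, and to close the argument one must show that every bounded subset of $\cD' \widehat\otimes \cB_c$ is contained in the weak closure of a bounded subset of $\cD' \widehat\otimes \dot\cB$, so that the polars defining the two strong topologies agree. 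This comparison of bounded sets is exactly the step the paper supplies, and it does not follow from Propositions~\ref{prop4} and~\ref{prop5bis} alone; without it the application of Proposition~\ref{prop9} to the triple $(\cD', \cB_c, \dot\cB)$ is not justified.
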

\begin{proof}
By Proposition \ref{prop5}$'$ we have $(\cD' \widehat\otimes \dot\cB)' = (\cD' \widehat\otimes \cB_c)'$ algebraically. Furthermore,
\[ (\cD' \widehat\otimes \dot\cB)'_b \cong (\cD' \widehat\otimes \cB_c)'_b, \]
because $\cD' \widehat\otimes \dot\cB$ is distinguished: the bounded sets of $\cD' \widehat\otimes \cB_c$ (and also of $\cD' \widehat\otimes \cB$) are contained in the weak closure of bounded sets in $\cD' \widehat\otimes \dot\cB$. Hence,
\[ (\cD' \widehat\otimes \cB_c)'_b \cong (\cD' \widehat\otimes \dot\cB)'_b. \]
Therefore, the first isomorphism in \eqref{prop5.eq1} results from Grothendieck's duality Corollary cited in the introduction \cite[Chap.~II, \S 4, n°1, Lemme 9, Corollaire, p.~90]{zbMATH03199982}: $\cD'$ is complete and nuclear, $\cB_c$ is complete \cite[p.~101]{FDVV} and semireflexive (see \cite[Proposition 1.3.1, p.~11]{zbMATH06308371} and \cite[p.~126]{zbMATH03145498}) and $(\cD' \widehat\otimes \dot\cB)'_b$ is complete, due to Proposition \ref{prop4}. The equality of the $\iota$- and the $\beta$-topology in the first line follows because $\cD$ and $\cD'_{L^1}$ are barrelled spaces \cite[p.~13]{zbMATH03145499}.
\end{proof}

\begin{remark*}
 We obtain, by Propositions \ref{prop5} and \ref{prop5}$'$, that $(\cD'(\dot\cB))'_b = \cD \widehat\otimes_\iota \cD'_{L^1}$ equals $\varinjlim_m ( \cD \widehat\otimes \cD^{\prime m}_{L^1})$ algebraically, which is a representation of the strong dual of $\cD'(\dot\cB)$ as a countable inductive limit.
\end{remark*}

In fact, for $K \in (\cD'(\dot\cB))'$ we conclude by the implication ``$\Rightarrow$'' above that there exist $m \in \bN_0$, $g_\alpha(x,y) \in \cD_x \widehat\otimes L^1_y$ with $K = \sum_{\abso{\alpha} \le m} \pd_y^\alpha g_\alpha$. Hence, $K \in \cD_x \widehat\otimes \cD^{\prime m}_{L^1, y}$. In order to see $\cD_x \widehat\otimes \cD^{\prime m}_{L^1,y} \subset \cD_x \widehat\otimes_\iota \cD'_{L^1,y}$ it suffices, due to ``$\Leftarrow$'' above, to show the implication
\[ K(x,y) \in \cD_x \widehat\otimes \cD^{\prime m}_{L^1} \Longrightarrow \exists g_\alpha (x,y) \in \cD_x \widehat\otimes L^1_{y} \textrm{ with }K = \sum_{\abso{\alpha} \le m } \pd_y^\alpha g_\alpha. \]
 However, this implication is a consequence of
\begin{gather*}
\cD^{\prime m}_{L^1} = \sum_{\abso{\alpha} \le m} \pd^\alpha L^1\quad\textrm{ and}\quad \cD_x \widehat\otimes \cD^{\prime m}_{L^1,y} = \sum_{\abso{\alpha} \le m}\pd_y^\alpha ( \cD_x \widehat\otimes L^1_{y}).
\end{gather*}

\begin{proposition}[Existence and uniqueness of the scalar product]\label{prop6}
 There is one and only one scalar product
\[
 \genfrac{}{}{0pt}{0}{{\langle\ ,\ \rangle}_x}{ {\langle\ ,\ \rangle}_y } \colon (\cD_x \widehat\otimes_\beta \cD'_{L^1,y} ) \times (\cD'_x \widehat\otimes \dot\cB_y) \to \bC
\]
which is partially continuous and coincides on $(\cD_x \otimes \cD'_{L^1,y}) \times (\cD'_x \otimes \dot\cB_y)$ with the product
$ {}_{\cD_x} \langle\ ,\ \rangle_{\cD'_x} \cdot {}_{\cD'_{L^1,y}} \langle\ ,\ \rangle_{\dot{\cB}_y}$.
\end{proposition}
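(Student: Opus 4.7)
The plan is to parallel the three-step structure adopted for Proposition \ref{prop3}, adapted to the $\dot\cB$-setting. For existence and partial (in fact joint) continuity, I would appeal directly to Proposition \ref{prop5}: the linear topological isomorphism $\cD \widehat\otimes_\beta \cD'_{L^1} \cong (\cD' \widehat\otimes \dot\cB)'_b$ supplies a canonical duality pairing, which is continuous and thus \emph{a fortiori} partially continuous. By construction of the pairing on the strong dual of a completed tensor product, its restriction to a decomposed couple $(\psi(x) \otimes T(y),\, S(x) \otimes \varphi(y))$ evaluates to precisely
\[ {}_{\cD_x}\bigl\langle \psi(x), S(x)\bigr\rangle_{\cD'_x} \cdot {}_{\cD'_{L^1,y}}\bigl\langle T(y), \varphi(y)\bigr\rangle_{\dot\cB_y}, \]
as required by the statement.

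For uniqueness I would invoke the ``Th\'eor\`emes de croisement'' \cite[Proposition 2, p.~18]{zbMATH03145499}, exactly as in the second proof of Proposition \ref{prop3}: any two partially continuous bilinear forms that coincide on the algebraic tensor $(\cD_x \otimes \cD'_{L^1,y}) \times (\cD'_x \otimes \dot\cB_y)$ must coincide on the completions. The required hypotheses are at hand because $\cD$ and $\cD'_{L^1}$ are barrelled and $\cD'$ is nuclear.

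An alternative third route, in the spirit of the closing argument of the proof of Proposition \ref{prop3}, would compose the vectorial scalar product
\[ \cD'_x(\dot\cB_y) \times (\cD_x \widehat\otimes_\beta \cD'_{L^1,y}) \longrightarrow \cD'_{L^1,y} \widehat\otimes_\iota \dot\cB_y \]
modelled on \cite[Proposition 10, p.~57]{zbMATH03145499}, with the natural duality pairing $\cD'_{L^1,y} \widehat\otimes_\iota \dot\cB_y \to \bC$ stemming from $(\dot\cB)'_b = \cD'_{L^1}$ \cite[p.~203]{TD}. The main obstacle I anticipate is that, unlike $\cE$ in Proposition \ref{prop3}, the space $\dot\cB$ is \emph{not} semireflexive, so the symmetry between the two factors that made the $\cE$-proof transparent is broken here. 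It is precisely Proposition \ref{prop5} — which internalizes the modified Grothendieck duality proved in Section \ref{sec3} — that repairs the situation and lets the crossing argument be transported without further modification.
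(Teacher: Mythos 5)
Your proposal is correct and follows essentially the same route as the paper, which likewise obtains existence, uniqueness and partial continuity from the ``Th\'eor\`emes de croisement'' and, as a second proof, composes the vectorial scalar product $(\cD_x \widehat\otimes_\beta \cD'_{L^1,y}) \times \cD'_x(\dot\cB_y) \to \cD'_{L^1,y}\widehat\otimes_\iota\dot\cB_y$ with the scalar product $\cD'_{L^1,y}\widehat\otimes_\iota\dot\cB_y \to \bC$. One small correction: the pairing you extract from Proposition \ref{prop5} is only separately (and hypo-)continuous, not jointly continuous --- the canonical bilinear form on $E'_b\times E$ is jointly continuous only for normable $E$ --- but since the statement asks only for partial continuity this overstatement does not affect the argument.
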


\begin{proof}
 A \emph{first} proof follows from the ``Th\'eor\`emes de croisement'' \cite[Proposition 2, p.~18]{zbMATH03145499}.

A \emph{second} proof consists in the composition of the vectorial scalar product given by \cite[Proposition 10, p.~57]{zbMATH03145499}, i.e.,
\[ \langle\ ,\ \rangle_\iota \colon (\cD_x \widehat\otimes_\beta \cD'_{L^1,y} ) \times \cD'_x ( \dot\cB_y) \to \cD'_{L^1,y} \widehat\otimes_\iota \dot\cB_y = \cD'_{L^1,y} \widehat\otimes_\beta \dot\cB_y, \]
and the scalar product $\langle\ ,\ \rangle \colon \cD'_{L^1,y} \widehat\otimes_\iota \dot\cB_y \to \bC$.
\end{proof}

\begin{remark*}If $K(x,y) \in \cD_x \widehat\otimes_\iota \cD'_{L^1,y}$ has the representation
\[ K(x,y) = \sum_{\abso{\alpha} \le m} \pd_y^\alpha g_\alpha(x,y) \]
with $g_\alpha(x,y) \in \cD_x \widehat\otimes L^1_y$ and if $L(x,y) \in \cD'_x ( \dot\cB_y)$ then
 \[ \genfrac{}{}{0pt}{0}{{\langle\ ,\ \rangle}_x}{ {\langle\ ,\ \rangle}_y } \bigl( K(x,y), L(x,y) \bigr) = \sum_{\abso{\alpha} \le m} (-1)^{\abso{\alpha}} \int_{\bR^n} {}_{\cD'_x} \bigl \langle \pd_y^\alpha L(x,y), g_\alpha(x,y) \bigr\rangle_{\cD_x}\,\ud y.  \]
\end{remark*}

We find a third expression for the scalar product by means of vector-valued multiplication and integration:

\begingroup
\def\thetheorem{\ref{prop6}$'$}
\addtocounter{theorem}{-1}
\begin{proposition}[Scalar product]\label{prop6bis}
If $K(x,y) \in \cD_x \widehat\otimes_\beta \cD'_{L^1,y}$ and $L(x,y) \in \cD'_x \widehat\otimes \dot\cB_y$ then the scalar product $\genfrac{}{}{0pt}{1}{{\langle\ ,\ \rangle}_x}{ {\langle\ ,\ \rangle}_y }$ (Proposition \ref{prop6}) can also be expressed as
\[ \genfrac{}{}{0pt}{0}{{\langle\ ,\ \rangle}_x}{ {\langle\ ,\ \rangle}_y } \bigl( K(x,y), L(x,y)\bigr) = {}_{\cD'_{L^1,xy}} \bigl\langle K(x,y) \genfrac{}{}{0pt}{1}{\cdot_x}{\cdot_y} L(x,y), 1(x,y) \bigr\rangle_{\cB_{c, xy}}, \]
wherein $\genfrac{}{}{0pt}{1}{\cdot_x}{\cdot_y}$ denotes the vectorial multiplicative product
\[ (\cD_x \widehat\otimes_\beta \cD'_{L^1,y} ) \times (\cD'_x \widehat\otimes \dot\cB_y) \to \cE'_x \widehat\otimes_\e \cD'_{L^1,y}. \]
\end{proposition}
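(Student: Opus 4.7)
The plan is to invoke the uniqueness part of Proposition~\ref{prop6}: since any partially continuous bilinear form on $(\cD_x\widehat\otimes_\beta\cD'_{L^1,y})\times(\cD'_x\widehat\otimes\dot\cB_y)$ that agrees with the scalar product on decomposed elements must coincide with $\genfrac{}{}{0pt}{1}{\langle\,,\,\rangle_x}{\langle\,,\,\rangle_y}$, it is enough to establish these two properties for the right-hand side
\[ \Phi(K,L) \coleq {}_{\cD'_{L^1,xy}}\bigl\langle K\genfrac{}{}{0pt}{1}{\cdot_x}{\cdot_y}L,\,1(x,y)\bigr\rangle_{\cB_{c,xy}}. \]

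First I would verify that the vectorial multiplicative product $K\genfrac{}{}{0pt}{1}{\cdot_x}{\cdot_y}L$ is well-defined as an element of $\cE'_x\widehat\otimes_\e\cD'_{L^1,y}$ and depends partially continuously on $(K,L)$. In the spirit of the second proof of Proposition~\ref{prop6} (via \cite[Proposition 10, p.~57]{zbMATH03145499}), this is built from the two scalar hypocontinuous multiplications
\[ \cD_x\times\cD'_x\to\cE'_x,\quad(\psi,S)\mapsto\psi S\qquad\text{and}\qquad\cD'_{L^1,y}\times\dot\cB_y\to\cD'_{L^1,y},\quad(T,\varphi)\mapsto T\varphi. \]
Composing with the canonical continuous embedding $\cE'_x\widehat\otimes_\e\cD'_{L^1,y}\hookrightarrow\cD'_{L^1,xy}$ and with the continuous duality pairing $\cD'_{L^1,xy}\times\cB_{c,xy}\to\bC$, one obtains that $\Phi$ is a partially continuous bilinear form on $(\cD_x\widehat\otimes_\beta\cD'_{L^1,y})\times(\cD'_x\widehat\otimes\dot\cB_y)$.

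Next, I would check agreement on elementary tensors. For $K=\psi(x)\otimes T(y)$ with $\psi\in\cD_x$, $T\in\cD'_{L^1,y}$ and $L=S(x)\otimes\varphi(y)$ with $S\in\cD'_x$, $\varphi\in\dot\cB_y$, the vectorial product factors as $K\genfrac{}{}{0pt}{1}{\cdot_x}{\cdot_y}L=(\psi S)(x)\otimes(T\varphi)(y)$, and pairing with $1(x,y)=1(x)\otimes 1(y)$ yields
\[ \Phi(K,L)={}_{\cE'_x}\langle\psi S,1\rangle_{\cE_x}\cdot{}_{\cD'_{L^1,y}}\langle T\varphi,1\rangle_{\cB_{c,y}}={}_{\cD'_x}\langle S,\psi\rangle_{\cD_x}\cdot{}_{\cD'_{L^1,y}}\langle T,\varphi\rangle_{\dot\cB_y}, \]
which is precisely the value of $\genfrac{}{}{0pt}{1}{\langle\,,\,\rangle_x}{\langle\,,\,\rangle_y}$ on $(K,L)$ as prescribed in Proposition~\ref{prop6}. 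The uniqueness clause of that proposition then forces $\Phi=\genfrac{}{}{0pt}{1}{\langle\,,\,\rangle_x}{\langle\,,\,\rangle_y}$.

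The main obstacle I expect lies in the first step: rigorously producing the vectorial multiplication with the correct topologies, i.e.\ lifting the two scalar hypocontinuous multiplications to a partially continuous bilinear map from the $\beta$- and $\e$-completed tensor products into $\cE'_x\widehat\otimes_\e\cD'_{L^1,y}$, and justifying the embedding of this space into $\cD'_{L^1,xy}$ so that the pairing against $1(x,y)\in\cB_{c,xy}$ is legitimate. Once this is set up following the scheme of \cite[Proposition 10, p.~57]{zbMATH03145499}, the verification on tensors and the conclusion via uniqueness are routine.
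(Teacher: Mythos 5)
Your proposal is correct and follows essentially the same route as the paper: construct the vectorial multiplicative product by lifting the two scalar multiplications $\cD_x\times\cD'_x\to\cE'_x$ and $\cD'_{L^1,y}\times\dot\cB_y\to\cD'_{L^1,y}$ via the ``Th\'eor\`emes de croisement,'' compose with the continuous embedding $\cE'_x\widehat\otimes\cD'_{L^1,y}\hookrightarrow\cD'_{L^1,xy}$ and the pairing against $1(x,y)$, and conclude by the uniqueness clause of Proposition~\ref{prop6}. The obstacle you flag (rigorously producing the vectorial multiplication with the correct topologies) is exactly what the paper discharges by citing \cite[Proposition 2, p.~18]{zbMATH03145499}, the $\bine$-product of the two multiplications, and the coincidence with the multiplication of \cite[Proposition 32, p.~127]{zbMATH03145499}.
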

\endgroup

\begin{proof}
 The vectorial multiplicative product $\genfrac{}{}{0pt}{1}{\cdot_x}{\cdot_y}$ exists uniquely as the composition of the canonical mapping defined by the ``Th\'eor\`emes de croisement'' \cite[Proposition 2, p.~18]{zbMATH03145499},
\[ \can \colon (\cD_x \widehat\otimes_\beta \cD'_{L^1,y} ) \times (\cD'_x \widehat\otimes \dot\cB_y) \to (\cD_x \widehat\otimes_\beta \cD'_x) \bine ( \cD'_{L^1,y} \widehat\otimes_\beta \dot\cB_y) \]
and the $\bine$-product of the two multiplications
\[ \cD_x \widehat\otimes_\beta \cD'_x \xrightarrow{\cdot_x} \cE'_x\quad\textrm{and}\quad \cD'_{L^1,y} \widehat\otimes_\beta \dot\cB_y \xrightarrow{\cdot_y} \cD'_{L^1,y}, \]
namely
\[ K(x,y) \genfrac{}{}{0pt}{1}{\cdot_x}{\cdot_y} L(x,y) = [ ( \cdot_x) \bine (\cdot_y) ] \circ \can. \]
Note that this vectorial multiplication coincides with that defined in \cite[Proposition 32, p.~127]{zbMATH03145499}. Due to the uniqueness of the scalar product and the continuity of the embedding $\cE'_x \widehat\otimes \cD'_{L^1,y} \hookrightarrow \cD'_{L^1,xy}$ the result follows.
\end{proof}

\begin{proposition}[Existence of the regularization mapping and representation of its transpose]\label{prop7}
 The regularization mapping
\[ \dot\cB' \to \cD'_y \widehat\otimes \dot\cB_x,\quad S \mapsto S(x-y) \]
is well-defined, linear, injective and continuous. Its transpose
\[ \cD_y \widehat\otimes_\iota \cD'_{L^1,x} \to \cD_{L^1,c} \]
is linear, continuous and given by
\[ K(x,y) \mapsto {}_{\cB_{c,x}} \langle 1(x), K(x-y,x) \rangle_{\cD'_{L^1,x} ( \cD_{L^1,c,y})}. \]
\end{proposition}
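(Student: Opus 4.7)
The plan is to handle the direct mapping first, then identify its transpose by combining Proposition~\ref{prop5} with Schwartz's description of the strong dual of $\dot\cB'$.

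\textbf{The direct map.} Well-definedness of $R\colon S\mapsto S(x-y)$ is precisely the regularization property \cite[remarque 3°, p.~202]{TD} recalled in the introduction, and linearity is immediate. For injectivity the key observation is that for any $\varphi\in\cD$ the partial pairing $\langle S(x-y),\varphi(y)\rangle_y$ equals the regularization $(S*\varphi)(x)\in\dot\cB_x$; picking a mollifying sequence $(\varphi_n)$ with $\varphi_n\to\delta$ in $\cE'$ one obtains $S*\varphi_n\to S$ in $\dot\cB'$, so $R(S)=0$ forces $S=0$. Continuity I would verify either directly on the seminorms characterising $\cD'_y(\dot\cB_x)$, or by appealing to the closed graph theorem: $\dot\cB'$ is ultrabornological and $\cD'_y\widehat\otimes\dot\cB_x$ sits continuously inside $\cD'_{xy}$, so the graph of $R$ is closed for trivial reasons.

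\textbf{The transpose.} Proposition~\ref{prop5} (with the roles of $x$ and $y$ interchanged) identifies $(\cD'_y\widehat\otimes\dot\cB_x)'_b$ with $\cD_y\widehat\otimes_\iota\cD'_{L^1,x}$, and the equality $(\dot\cB')'_b=\cD_{L^1,c}$ is classical \cite[p.~203]{TD}. Consequently $R^t$ is automatically linear and continuous from $\cD_y\widehat\otimes_\iota\cD'_{L^1,x}$ to $\cD_{L^1,c}$. To recover the explicit formula I would test on decomposed tensors $K=\varphi(y)\otimes T(x)$ with $\varphi\in\cD_y$ and $T\in\cD'_{L^1,x}$. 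Substituting into $\langle R(S),K\rangle=\langle S,R^t(K)\rangle$ and using the partial pairing above together with the convolution swap
\[
\int T(x)(S*\varphi)(x)\,\ud x=\int S(z)(T*\check\varphi)(z)\,\ud z
\]
one obtains $R^t(\varphi\otimes T)=T*\check\varphi$. Reading off the proposition's formula on the same decomposed $K$ gives $\langle 1(x),K(x-y,x)\rangle_x=\int T(x)\varphi(x-y)\,\ud x=(T*\check\varphi)(y)$, in agreement with $R^t(\varphi\otimes T)$. Density of $\cD_y\otimes\cD'_{L^1,x}$ in $\cD_y\widehat\otimes_\iota\cD'_{L^1,x}$ together with continuity of both sides then extends the formula to arbitrary $K$.

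\textbf{The main obstacle.} The delicate point is to give the substitution $K(x-y,x)$ a rigorous meaning for a general (non-decomposed) $K\in\cD_y\widehat\otimes_\iota\cD'_{L^1,x}$ and to verify that it genuinely defines an element of $\cD'_{L^1,x}(\cD_{L^1,c,y})$, so that pairing with $1(x)\in\cB_{c,x}$ produces a continuous $\cD_{L^1,c}$-valued map. I would interpret $K(x-y,x)$ as the pullback of $K$ under the linear diffeomorphism $(x,y)\mapsto(x-y,x)$, combining the vector-valued substitution calculus of \cite{zbMATH03145498,zbMATH03145499} with the vectorial multiplication already used in the preceding proposition. Once the target space is secured, continuity of the $\langle 1(x),\cdot\rangle_x$-pairing transfers the formula from decomposed tensors to the completed tensor product, completing the identification of $R^t$.
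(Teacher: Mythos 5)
Your treatment of the direct map matches the paper's: well-definedness is Schwartz's regularization result, and continuity follows from the closed graph theorem once one knows that $\dot\cB'$ (via $\dot\cB'\cong c_0\widehat\otimes s'$) and $\cD'\widehat\otimes\dot\cB$ (Proposition \ref{prop4}) are ultrabornological; your mollification argument for injectivity is a harmless addition, and the continuity of the transpose is obtained the same way. For the explicit formula, however, you have correctly located the crux and then left it unproven. Your strategy --- verify the identity on decomposed tensors and extend by density --- requires the right-hand side $K\mapsto{}_{\cB_{c,x}}\langle 1(x),K(x-y,x)\rangle$ to be a well-defined \emph{continuous} map on all of $\cD_y\widehat\otimes_\iota\cD'_{L^1,x}$, and that in turn requires precisely the implication you call ``the main obstacle'': that $K(x,y)\in\cD_y\widehat\otimes_\iota\cD'_{L^1,x}$ forces $K(x-y,x)\in\cD'_{L^1,x}(\cD_{L^1,c,y})$. ``Pullback under the linear diffeomorphism'' does not settle this: the spaces $\cD'_{L^1}$, $\cD_{L^1,c}$, $\cB_c$ are not stable under an arbitrary substitution mixing the variables, and the entire content of the claim is that the $\cD$-factor upgrades the regularity in the sheared variable. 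Without this step the density argument cannot close, since an identity between two maps extends from a dense subspace only when both sides are continuous.

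The paper supplies exactly this missing piece in two stages: first, a vectorial regularization property (\cite[Proposition 15]{RCD}) applied in the $\cD$-variable gives $K(v-w,u)\in\cD'_{L^1,u}\widehat\otimes_\e\cB_{c,v}\widehat\otimes_\e\cD_{L^1,c,w}$, the point being that convolution with $\varphi\in\cD$ maps $\cD'_{L^1,c}$ continuously into $\cD_{L^1,c}$; second, the restriction to the diagonal $w=v$ is effected by Schwartz's vector-valued multiplication $\cD'_{L^1}\times\cB_c(E)\to\cD'_{L^1}(E)$ \cite[Proposition 25, p.~120]{zbMATH03145499}. (The paper then derives the formula itself not by density but by rewriting the scalar product of Proposition \ref{prop6bis} through a linear change of variables and the vector-valued Fubini theorem; your elementary-tensor computation is a legitimate alternative once the membership statement above is secured.) You should either reproduce this regularization-plus-diagonal-multiplication argument or provide a substitute for it; as written, the proposal establishes the first half of the proposition and the continuity, but not the representation, of the transpose.
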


\begin{proof}
1. The well-definedness is L.~Schwartz' classical regularization result \cite[Remarque 3°, p.~202]{TD}.

2. Due to the (sequentially) closed graph of the regularization mapping the continuity is implied by \cite[Chap.~I, Th\'eor\`eme B, p.~17]{zbMATH03199982}, if $\dot\cB'$ and $\cD' \widehat\otimes \dot\cB$ are ultrabornological. The sequence-space representation of $\dot\cB' \cong c_0 \widehat\otimes s'$ \cite[Theorem 3, p.~13]{bargetz} shows that $\dot\cB'$ is ultrabornological if \cite[Proposition 2, p.~75]{zbMATH03312960} is applied. The space $\cD' \widehat\otimes \dot\cB$ is ultrabornological by Proposition \ref{prop4}. Alternatively, for applying the closed graph theorem one can use that $\cD' \widehat\otimes \dot\cB$ has a completing web \cite[p.~736]{MR0306857}.

The transpose of the regularization mapping is continuous by \cite[Corollary to Proposition 3.12.3, p.~256]{zbMATH03230708}.

3. The representation in Proposition \ref{prop6bis} yields for $K(x,y) \in \cD_x \widehat\otimes \cD'_{L^1,y}$ and $S(y-x) \in \cD'_x \widehat\otimes \dot\cB_y$:
\[
\genfrac{}{}{0pt}{0}{\langle \ , \ \rangle_x}{\langle \ , \ \rangle_y} \bigl( K(x,y), S(y-x) \bigr) = {}_{\cD'_{L^1,xy}} \bigl\langle K(x,y) \genfrac{}{}{0pt}{1}{ \cdot_x }{ \cdot_y } S(y-x), 1(x,y) \bigr\rangle_{\cB_{c,xy}}. 
\]
The linear change of variables
\[
 \begin{aligned}
  x &= v-u \\
  y &= v
 \end{aligned}
\quad \begin{aligned}
         u &= y-x \\
v &= y
        \end{aligned}
\]
and the Theorem of Fubini \cite[Corollary, pp.~136, 137]{zbMATH03145498} imply that the last expression equals
\begin{multline*}
 {}_{\cB_{c, uv}} \langle 1(u,v), K(v-u, v) \genfrac{}{}{0pt}{1}{ \cdot_u }{ \cdot_v } ( S(u) \otimes 1(v)) \rangle_{\cD'_{L^1,uv}} \\
= {}_{\dot\cB'_u} \langle S(u), {}_{\cB_{c,v}} \langle 1(v), K(v-u, v) \rangle_{\cD'_{L^1, v} ( \cD_{L^1, c, u})} \rangle_{\cD_{L^1, c, u}} 
\end{multline*}
if we show that
\begin{equation}\label{eq4}
 K(v,u) \in \cD_v \widehat\otimes_\iota \cD'_{L^1, u} \Longrightarrow K(v-u, v) \in \cD'_{L^1, v} ( \cD_{L^1, c,u}). 
\end{equation}
Then, the multiplicative product $K(v-u, v) \genfrac{}{}{0pt}{1}{ \cdot_u }{ \cdot_v } ( S(u) \otimes 1(v))$ is defined as the image of $( K(v-u, v), S(u) \otimes 1(v) )$
under the mapping
\[ \cD'_{L^1, v} ( \cD_{L^1, c, u}) \times \cB_{c,v} ( \dot\cB'_u) \xrightarrow{ \genfrac{}{}{0pt}{1}{ \cdot_u }{ \cdot_v } } \cD'_{L^1, v} (\cD'_{L^1,c,u}). \]
It remains to prove the implication \eqref{eq4}: a vectorial regularization property similar to \cite[Proposition 15]{RCD} shows that
\begin{gather*}
 K(v,u) \in  \cD_v \widehat\otimes_\iota \cD'_{L^1,u} \subset \cD_v \widehat\otimes \cD'_{L^1,u} \\
\Longrightarrow K(v-w, u) \in \cD'_{L^1, u} \widehat\otimes_\e \cB_{c,v} \widehat\otimes_\e \cD_{L^1, c, w}
\end{gather*}
because the convolution with $\varphi \in \cD$ maps the space $\cD'_{L^1,c}$ continuously into $\cD_{L^1,c}$. The vector-valued multiplication $\cD'_{L^1} \times \cB_c (E) \xrightarrow{\cdot} \cD'_{L^1}(E)$
by \cite[Proposition 25, p.~120]{zbMATH03145499} then yields
\[ K(v-u,v) \in \cD_{L^1, v} \widehat\otimes_\e \cD_{L^1, c, u} = \cD'_{L^1, v} ( \cD_{L^1, c, u}). \qedhere\]
\end{proof}

\section{On the duals of tensor products \texorpdfstring{\\} -- two complements}\label{sec3}

The goal of this section is the formulation of propositions which yield, as special cases, the strong duals of the spaces $\cD' \widehat\otimes \cD'_{L^1}$ and $\cD' \widehat \otimes \dot\cB$. These spaces are the ``endpoints'' in the scale of reflexive spaces $\cD' \widehat\otimes \cD'_{L^p}$ and $\cD' \widehat\otimes \cD_{L^q}$, $1<p,q<\infty$, the duals of which can be determined by the Corollaire \cite[Chap.~II, \S 4, n°1, Lemme 9, Corollaire, p.~90]{zbMATH03199982} cited in the introduction.

\begin{proposition}[Dual of a completed tensor product]\label{prop8}
 Let $\cH = \varinjlim_k \cH_k$ be the strict inductive limit of nuclear Fr\'echet spaces $\cH_k$ and $F$ the strong dual of a distinguished Fr\'echet space. Then
 \[ (\cH'_b \widehat\otimes F)'_b \cong \overline{\cH}(F'_b) \coleq \varinjlim_k ( \cH_k(F'_b)). \]
 The space $\overline{\cH}(F'_b)$ is a complete, strict (LF)-space and $\cH'_b \widehat\otimes F$ is distinguished. If $F$ is reflexive, $\cH'_b \widehat\otimes F$ is reflexive, too.
\end{proposition}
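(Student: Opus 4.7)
The strategy is to exploit the strict (LF)-structure of $\cH$ by reducing everything to the nuclear Fréchet level and then passing back to the limit. Since $\cH = \varinjlim_k \cH_k$ is strict with Fréchet steps, one has $\cH'_b \cong \varprojlim_k (\cH_k)'_b$, and each $(\cH_k)'_b$ is a complete nuclear (DF)-space. A crucial preparatory observation is that the assumption ``$F = G'_b$ with $G$ distinguished Fréchet'' forces $F'_b = G''_b$ to be a Fréchet space, so that each $\cH_k(F'_b) = \cH_k \widehat\otimes F'_b$ is Fréchet (in fact nuclear, by Grothendieck's permanence of nuclearity).

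The first key step is to establish the duality isomorphism at each level,
\[ \bigl( (\cH_k)'_b \widehat\otimes F \bigr)'_b \cong \cH_k(F'_b), \]
using the generalization of Grothendieck's Corollary developed in this section. This is the point where distinguishedness enters: semi-reflexivity of $F$ is not assumed, but the fact that $F$ is the strong dual of a distinguished Fréchet space supplies the Fréchet structure on $F'_b$ and ensures that the completeness condition on $((\cH_k)'_b \widehat\otimes F)'_b$ can be verified. Nuclearity of $\cH_k$ delivers $\cH_k = ((\cH_k)'_b)'_b$ and collapses all reasonable tensor-product topologies to a single one.

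The second step is to interchange the completed tensor product with the projective limit,
\[ \cH'_b \widehat\otimes F \cong \varprojlim_k \bigl( (\cH_k)'_b \widehat\otimes F \bigr), \]
again exploiting nuclearity and strictness of the (LF)-system. Dualizing the resulting projective limit then gives
\[ (\cH'_b \widehat\otimes F)'_b \cong \varinjlim_k \bigl( (\cH_k)'_b \widehat\otimes F \bigr)'_b \cong \varinjlim_k \cH_k(F'_b) = \overline{\cH}(F'_b), \]
via the standard strong-dual/strict projective-limit duality.

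The remaining assertions are formal. Each $\cH_k(F'_b)$ is nuclear Fréchet and the transition maps are topological embeddings induced from $\cH_k \hookrightarrow \cH_{k+1}$, so $\overline{\cH}(F'_b)$ is a strict (LF)-space and hence complete; strict (LF)-spaces are barrelled, and barrelledness of $(\cH'_b \widehat\otimes F)'_b$ is exactly the distinguishedness of $\cH'_b \widehat\otimes F$. If in addition $F$ is reflexive, then $F'_b$ is reflexive Fréchet, each $\cH_k \widehat\otimes F'_b$ is reflexive (Grothendieck's permanence of reflexivity under nuclear Fréchet tensor products), so $\overline{\cH}(F'_b)$ is a reflexive strict (LF)-space, and dualizing once more shows that $\cH'_b \widehat\otimes F$ is reflexive as well. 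The main obstacle will be the pointwise duality isomorphism $((\cH_k)'_b \widehat\otimes F)'_b \cong \cH_k(F'_b)$ without the semi-reflexivity of $F$ --- this is exactly the content of the complement to Grothendieck's Corollary toward which the section is directed, and the distinguishedness of $G$ is precisely what makes it available; a secondary technical point is the commutation of the completed tensor product with the projective limit.
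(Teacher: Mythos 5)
Your reduction to the Fr\'echet steps is natural, but it runs into two genuine obstructions, and the paper's proof is structured precisely to avoid them. First, the ``pointwise'' duality $\bigl((\cH_k)'_b \widehat\otimes F\bigr)'_b \cong \cH_k(F'_b)$ cannot be obtained from the complements to Grothendieck's Corollary in this section: Propositions \ref{prop9} and \ref{prop10} keep semireflexivity of $F$ as a hypothesis (Hypothesis~3 in Proposition \ref{prop10}), and the strong dual of a distinguished Fr\'echet space need not be semireflexive --- the intended examples $F = \ell^1 = (c_0)'_b$ and $F = \cD'_{L^1}$ are not. The role of distinguishedness in Proposition \ref{prop9} is only to secure completeness of the strong dual via the auxiliary space $F_0$; it does not replace semireflexivity, and without Hypothesis~3 Proposition \ref{prop10} yields only the one-sided comparison $t_\iota \ge t_b$. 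Second, even granting the step-level dualities and the (itself nontrivial) commutation $\cH'_b \widehat\otimes F \cong \varprojlim_k \bigl((\cH_k)'_b \widehat\otimes F\bigr)$, the passage $\bigl(\varprojlim_k G_k\bigr)'_b \cong \varinjlim_k (G_k)'_b$ is not a ``standard'' duality: for countable reduced projective limits it is essentially a distinguishedness/localization property of bounded sets, i.e.\ a statement of the same nature as the conclusion you are trying to prove, and it fails in general (any non-distinguished Fr\'echet space gives a counterexample).

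The paper instead works globally. Algebraically it identifies $(\cH'_b \widehat\otimes F)'$ with $F'_c(\cH;\e) = \varinjlim_k F'_c(\cH_k;\beta)$ by Schwartz's Proposition~22, using reflexivity of $\cH$ and the fact that bounded maps into a strict (LF)-space localize into some $\cH_k$; the distinguishedness of the predual $G$ of $F$ enters exactly once, to give $F'_c(\cH_k;\beta) = F'_c(\cH_k) \cong \cH_k \widehat\otimes F'_b$. Topologically, it shows the continuous inclusion $(\cH'_b(F))'_b \hookrightarrow \cH \widehat\otimes F'_b$ directly, and for the converse uses that $\overline{\cH}(F'_b)$ is bornological, so that it suffices to check boundedness of images of bounded sets: a bounded $H \subseteq \overline{\cH}(F'_b)$ sits in some $\cH_k(F'_b)$ and, by Grothendieck's decomposition of bounded sets in $\cH_k \widehat\otimes F'_b$, lies in the closed absolutely convex hull of $A \otimes B$ with $A \subseteq \cH_k$, $B \subseteq F'_b$ bounded, whence $H$ is weakly bounded in $(\cH'_b(F))'_b$. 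If you want to salvage your plan, you would have to prove the step-level duality for non-semireflexive $F$ by some other means and then justify the limit interchange --- at which point you are essentially reconstructing the paper's bornological argument.
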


\begin{proof}
 By \cite[Prop.~22, p.~103]{zbMATH03145499} we have algebraically
 \[ (\cH'_b \widehat\otimes F)' \cong F'_c ( \cH; \e ) = F_c' ( \cH; \beta) = \varinjlim_k ( F_c'(\cH_k; \beta)) \]
 due to the reflexivity of $\cH$ and due to the fact that a linear and continuous map $T \colon F \to \cH$ is bounded if and only if there exists $k$ and a $0$-neighborhood $U$ in $F$ such that $T$ maps into $\cH_k$ and $T(U) \subseteq \cH_k$ is bounded. Because $F$ is the strong dual of a distinguished Fr\'echet space,
 \[ F_c' ( \cH_k; \beta) = F_c' ( \cH_k) \]
 by \cite[p.~98, b)]{zbMATH03145499}. Hence,
 \[ F_c'(\cH_k) = \cL_\e ( (\cH_k)'_c, F_c') \cong \cL_c ( F, \cH_k) = F_b' \widehat\otimes \cH_k \]
 (see \cite[Chap.~I, p.~75]{zbMATH03199982}). All together,
 \[ (\cH_b' \widehat\otimes F)' = \varinjlim_k \cH_k(F_b') = \overline{\cH}(F_b'). \]
 
The strong dual topology on $(\cH'_b (F))'$ is finer than the topology of uniform convergence on products of bounded subsets of $\cH'_b$ and $F$ \cite[Prop.~22, p.~103]{zbMATH03145499}, i.e., the embedding $(\cH'_b (F))'_b \hookrightarrow \cH \widehat\otimes F_b'$ is continuous.
 
In order to show the continuity of the map
 \[ \Id \colon \overline{\cH}(F_b') \to (\cH'_b (F))'_b \]
 we use an idea from the proof of \cite[Prop.~3, p.~542]{zbMATH03163214}: it suffices that bounded sets in $\overline{\cH}(F_b')$ are bounded in $(\cH'_b(F))'_b$ because $\overline{\cH}(F_b')$ is bornological (note that $F'_b$ is a Fr\'echet space by \cite[p.~64]{zbMATH03092845}, and that the inductive limit of the Fr\'echet spaces $\cH_k \widehat\otimes F'_b = \cH_k ( F_b')$ is bornological). If $H \subseteq \overline{\cH}(F_b')$ is bounded then by the regularity of the inductive limit there exists $k$ such that $H$ is bounded in $\cH_k(F_b')$. By \cite[Chap.~II, \S 3, n°1, Prop.~12, p.~73]{zbMATH03199982} there are bounded subsets $A \subseteq \cH_k$ and $B \subseteq F'_b$ such that $H$ is contained in the closed absolutely convex hull of $A \otimes B$. For each $T \in \cH'_b \widehat\otimes F \cong \cL_b(\cH, F)$ the set $T(H)$ is contained in the closed absolutely convex hull of $T(A \otimes B) = \langle T(A), B \rangle$, which is bounded because $T(A) \subseteq F$ is bounded and $B$ is equicontinuous. Hence, we see that $H$ is weakly bounded and thus bounded in $(\cH_b'(F))'_b$ due to the completeness of $\cH'_b(F)$.
 \end{proof}

\begin{remark*}
 \begin{enumerate}[label=(\arabic*)]
  \item The strong topology of $(\cH'_b(F))'_b$ coincides with the topology induced by $\cL_b(\cH'_b, F'_b) = \cH \widehat\otimes F_b'$. By \cite[Prop.~22, p.~103]{zbMATH03145499} this is equivalent with saying that bounded subsets of $\cH'_b(F) = \cH'_b \widehat\otimes F$ are $\beta$-$\beta$-decomposable.
  \item If the space $F$ is the strong dual of a \emph{reflexive} Fr\'echet space then $\cH'_b \widehat\otimes F$ is reflexive too, i.e.,
  \[ (\cH'_b \widehat\otimes F)'_b \cong \overline{\cH}(F'_b) \]
  and
  \[ (\overline{\cH}(F'_b))'_b \cong \cH'_b \widehat\otimes F \]
 (for the second part use a suitable generalization of \cite[Corollaire 3, p.~104]{zbMATH03145499}). This assertion generalizes the reasoning in \cite[p.~314, 4.2, p.~315]{zbMATH03223921}.
 \item The following list of distribution spaces illustrates possibilities of the applicability of Proposition 8:
 \begin{alignat*}{2}
  \cH &: \cD, \cD_+, \cD_-, \cD_{+\Gamma}\quad&&\textrm{(cf. \cite[p.~154, p.~186]{zbMATH03145499}}),\\
  F & : \cD'_{L^1}, \cD'^m_{L^1}, \ell^1, \cS'^m, \cE'^m \quad &&(m \in \bN_0).
 \end{alignat*}
One has to show that $\cS^m$ and $\cE^m$ are distinguished.
 \item The dual of the space of partially summable distributions $\cD' \widehat\otimes \cD'_{L^1}$ was given first in \cite[Prop.~3(2), p.~541]{zbMATH03163214}, i.e., $(\cD' \widehat\otimes \cD'_{L^1})'_b = \overline{\cD}(\cB) = \cD \widehat\otimes_\iota \cB$.
 \end{enumerate}
\end{remark*}

By considering the proof of Proposition \ref{prop5}, i.e., $(\cD'(\dot\cB))'_b = (\cD' \widehat\otimes \cB_c)'_b = \cD \widehat\otimes \cD'_{L^1}$, we were led to the following modification of A.~Grothendieck's Corollary \cite[Chap.~II, \S 4, n°1, Lemme 9, Corollaire, p.~90]{zbMATH03199982} cited in the introduction.


 \begin{proposition}[Dual of a completed tensor product]\label{prop9}
  Let $\cH$ be a Hausdorff, quasicomplete, nuclear, locally convex space with the strict approximation property, $F$ a quasicomplete, semireflexive, locally convex space. Let $F_0$ be a locally convex space such that
  \[ (\cH \widehat\otimes F)'_b = (\cH \widehat \otimes F_0)'_b \]
  and $(\cH \widehat\otimes F_0)'_b$ is complete. Then,
  \[ (\cH \widehat\otimes F)'_b \cong \cH'_b \widehat\otimes_\iota F'_b \]
  and $(\cH \widehat\otimes F)'_b$ is semireflexive.
 \end{proposition}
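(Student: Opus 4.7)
The plan is to adapt A.~Grothendieck's proof of his Corollaire \cite[Chap.~II, \S 4, n°1, Lemme 9, Corollaire, p.~90]{zbMATH03199982} to the present weaker hypotheses, letting the strict approximation property of $\cH$ play the role of completeness and letting the auxiliary space $F_0$ supply the completeness of the strong dual. Throughout I would use the additional fact that a quasicomplete nuclear space is semireflexive \cite[Chap.~II, \S 2, n°2]{zbMATH03199982}, so $\cH$ is semireflexive in addition.

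First I would identify $(\cH \widehat\otimes F)'$ algebraically. By the nuclearity of $\cH$ together with \cite[Prop.~22, p.~103]{zbMATH03145499}, every continuous linear form $\ell$ on $\cH \widehat\otimes F$ admits a representation as a nuclear bilinear form $\ell = \sum_n \lambda_n\,x'_n \otimes y'_n$ with $(\lambda_n) \in \ell^1$, $(x'_n) \subseteq \cH'$ equicontinuous, $(y'_n) \subseteq F'$ equicontinuous. The semireflexivity of $F$ identifies equicontinuous subsets of $F'$ with strongly bounded ones, yielding $\cH'_b \otimes F'_b$ as a dense algebraic subspace of $(\cH \widehat\otimes F)'$.

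Second I would identify the topologies on this subspace. The strict approximation property of $\cH$, combined with the semireflexivity of $F$, gives a $\beta$-$\beta$-decomposition of every bounded subset of $\cH \widehat\otimes F$: every such set lies in the closed absolutely convex hull of a product $A \otimes B$ with $A \subseteq \cH$, $B \subseteq F$ bounded, analogous to the step carried out in the proof of Proposition~\ref{prop8}. Consequently the strong dual topology on $(\cH \widehat\otimes F)'_b$, restricted to $\cH'_b \otimes F'_b$, coincides with the $\iota$-topology. Invoking the hypothesis that $(\cH \widehat\otimes F)'_b = (\cH \widehat\otimes F_0)'_b$ is complete, passage to the completion then yields the isomorphism $(\cH \widehat\otimes F)'_b \cong \cH'_b \widehat\otimes_\iota F'_b$.

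For the semireflexivity of $(\cH \widehat\otimes F)'_b$, Grothendieck's permanence result \cite[Chap.~II, \S 3, n°2, Prop.~13]{zbMATH03199982} applied to the semireflexive spaces $\cH$ and $F$ gives that $\cH \widehat\otimes F$ is semireflexive; the semireflexivity of the strong dual then follows by combining this with the $\beta$-$\beta$-decomposition of bounded sets and the identification above, which shows that strongly bounded sets in $(\cH \widehat\otimes F)'_b$ are relatively weakly compact. The principal obstacle will be the second step: without full completeness of $\cH$, the $\beta$-$\beta$-decomposition of bounded sets of $\cH \widehat\otimes F$ must be re-established by using the strict approximation property of $\cH$ as a substitute for completeness in Grothendieck's original argument.
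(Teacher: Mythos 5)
The paper proves Proposition~\ref{prop9} as an immediate corollary of Proposition~\ref{prop10}, and your overall strategy (adapt Grothendieck's Corollaire, let the strict approximation property replace completeness of $\cH$, and use $F_0$ only to supply completeness of the strong dual) is the right one; your first step is essentially Proposition~\ref{prop10}~\ref{prop9.1}--\ref{prop9.3}. The gap is your second step. You propose to establish that every bounded subset of $\cH \widehat\otimes F$ lies in the closed absolutely convex hull of $A \otimes B$ with $A \subseteq \cH$, $B \subseteq F$ bounded, and to deduce $t_b = t_\iota$ from this. That decomposition is Grothendieck's ``probl\`eme des topologies''; it is available for the projective tensor product of two \emph{Fr\'echet} spaces one of which is nuclear \cite[Chap.~II, \S 3, n°1, Prop.~12, p.~73] --- which is exactly why Proposition~\ref{prop8} can use it for $\cH_k \widehat\otimes F'_b$ --- but it is not known (and cannot simply be ``re-established from the strict approximation property'') for a quasicomplete non-metrizable $\cH \widehat\otimes F$ as in Proposition~\ref{prop9}. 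You correctly flag this as the principal obstacle, but the resolution is not to prove the decomposition: it is to avoid it, and neither Grothendieck's original argument nor Proposition~\ref{prop10} needs it. A symptom of the same confusion is the claim that semireflexivity of $F$ identifies equicontinuous subsets of $F'$ with strongly bounded ones; that is infrabarrelledness of $F$, not semireflexivity, and it is not assumed here.

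What the paper does instead (Proposition~\ref{prop10}~\ref{prop9.4}--\ref{prop9.5}): the strict approximation property and quasicompleteness enter only through the identification $\cH \wideparen\otimes_\pi F \cong \cB^h_\e(\cH'_c, F'_c)$ (Lemma~\ref{prop9.lem2}). For $t_b \le t_\iota$ one shows that the image of a bounded set $B$ of such bilinear forms is a \emph{separately equicontinuous} subset of $\cB^s(\cH', F')$, whose polar is then a $t_\iota$-neighborhood equal to $B^\circ$ --- no tensor decomposition of $B$ is involved. For $t_\iota \le t_b$ one uses that $\cH$ (being quasicomplete nuclear) and $F$ are semireflexive, so that a separately equicontinuous subset of $\cB^s(\cH', F')$ is pointwise bounded and hence, by Mackey's theorem, bounded in $\cB^s_\e(\cH'_\sigma, F'_\sigma) \cong \cH \wideparen\otimes_\pi F$. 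With $t_b = t_\iota$ and the density from \ref{prop9.3}, completeness of $(\cH \widehat\otimes F)'_b = (\cH \widehat\otimes F_0)'_b$ gives $(\cH \widehat\otimes F)'_b \cong \cH'_b \widehat\otimes_\iota F'_b$; the semireflexivity assertion comes from \cite[Chap.~I, \S 4, n°2, Corollaire of Prop.~24, p.~118]{zbMATH03199982}, not from any decomposition of bounded sets. You should replace your second step by this two-sided comparison of topologies.
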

The \emph{proof} is an immediate consequence of Proposition \ref{prop10}. The semireflexivity is a consequence of \cite[Corollaire 2, p.~118]{zbMATH03199982}.

\begin{remark*}
 \begin{enumerate}[label=(\arabic*)]
  \item A.\ Grothendieck's hypotheses ``\,$\cH$ complete, $F$ complete and semireflexive'' are weakened by the assumption of quasicompleteness at the expense of the additional hypothesis of the strict approximation property for $\cH$. The completeness of the strong dual $(\cH \widehat\otimes F)'_b$ is implied by the existence of an additional space $F_0$ with the corresponding property.
  \item By checking the hypotheses of Proposition \ref{prop9} we have shown in Proposition \ref{prop5} that
  \[ (\cD' \widehat\otimes \dot\cB)'_b \cong \cD \widehat\otimes_\iota \cD'_{L^1}. \]
  Two other applications to concrete distribution spaces are:
  \[ (\cD' \widehat\otimes \dot\cB')'_b \cong \cD \widehat\otimes_\iota \cD_{L^1} = \overline{\cD}(\cD_{L^1}) \]
  with $F_0 = ( \cD'_{L^\infty}, \kappa ( \cD'_{L^\infty}, \cD_{L^1}))$, and
  \[ (\cD' \widehat\otimes c_0)'_b \cong \cD \widehat\otimes_\iota \ell^1 = \overline{\cD}(\ell^1) \]
  with $F_0 = (\ell^\infty, \kappa(\ell^\infty, \ell^1)$.
\item As an application of Proposition \ref{prop9} we see that spaces like $\cS' \widehat\otimes \dot\cB$ or $\cO_M \widehat\otimes c_0$ are distinguished. This does not follow from \cite[Chap.~II, \S3, n°2, Corollaire 2, p.~77]{zbMATH03199982}. In fact, $(\cS' \widehat\otimes \dot\cB)'_b \cong \cS \widehat\otimes_\iota \cD'_{L^1}$ and $\cS \widehat\otimes_\iota \cD'_{L^1}$ is barrelled by \cite[Chap.~I, p.~78]{zbMATH03199982}. 
 \end{enumerate}
\end{remark*}

The proof of Proposition \ref{prop9} rests on a generalization of Grothendieck's Corollary on duality (cf.~\cite[Chap.~II, \S 4, n°1, Lemme 9, Corollaire, p.~90]{zbMATH03199982}) which we prove now.

\begin{proposition}[Duals of tensor products]\label{prop10}
\leavevmode\\
\underline{Hypothesis 1:} Let $E$ be a nuclear, $F$ a locally convex space.

 Then:
\begin{enumerate}[label=(\roman*)]
 \item\label{prop9.1} Every element $u$ of the dual $\cB(E,F) = (E \widehat\otimes_\pi F)' = (E \otimes_\pi F)'$ is the image (under a canonical mapping) of an element $u_0$ of a space $E'_A \widehat\otimes_\pi E'_B$, where $A$ and $B$ are absolutely convex, weakly closed, equicontinuous subsets of $E'$ and $F'$, respectively.
\item\label{prop9.2} If the element $u_\iota$ of $E' \wideparen \otimes_\iota F' \subset E' \widehat \otimes_\iota F'$ defined by $u_0$ is zero then $u$ is zero, from which we have a canonical injection of $\cB(E,F)$ into $E' \wideparen \otimes_\iota F' \subset E' \widehat\otimes_\iota F'$.
\item\label{prop9.3} $\cB(E,F)$ is dense in $E' \widehat\otimes_\iota F'$ and strictly dense in $E' \wideparen\otimes_\iota F'$.
\end{enumerate}
If, in addition, we have

\hangindent=3em
\hangafter=1
\underline{Hypothesis 2:} $E$ is quasicomplete and has the strict approximation property and $F$ is quasicomplete,

then we obtain:
\begin{enumerate}[label=(\roman*),resume]
 \item\label{prop9.4} The topology $t_\iota$ induced from $E' \wideparen \otimes_\iota F'$ on $\cB(E,F)$ is finer than the topology $t_b$ of $(E \wideparen\otimes_\pi F)'_b$, i.e., $t_\iota \ge t_b$.
\end{enumerate}
If, in addition, we have

\underline{Hypothesis 3:} $F$ is semireflexive,

then we obtain:
\begin{enumerate}[label=(\roman*),resume]
 \item\label{prop9.5} The topology induced from $E' \widehat\otimes_\iota F'$ on $\cB(E,F)$ is identical with the topology of the strong dual $(E \wideparen\otimes_\pi F)'_b$, i.e., $t_\iota = t_b$. $E' \widehat\otimes_\iota F'$ is the completion of $\cB(E,F)$.
 \item\label{prop9.6} $(E \wideparen\otimes_\pi F)'_b$ $\cong E' \widehat\otimes_\iota F'$ if and only if $(E \wideparen\otimes_\pi F)'_b$ is complete.
 
 $(E \wideparen\otimes_\pi F)'_b \cong E' \wideparen\otimes_\iota F'$ if and only if $(E \wideparen\otimes_\pi F)'_b$ is quasi-complete.
 \item\label{prop9.7} $E \widehat\otimes F$ is semireflexive.
\end{enumerate}
\end{proposition}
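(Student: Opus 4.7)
The strategy is to follow Grothendieck's original approach to the Corollary in \cite[Chap.~II, \S 4, n°1, p.~90]{zbMATH03199982} but to carefully track where completeness is actually used, so that quasicompleteness plus the strict approximation property of $E$ can substitute. The natural order is to dispatch \ref{prop9.1}--\ref{prop9.3} under Hypothesis~1 only, then treat \ref{prop9.4} under Hypothesis~2, and finally deduce \ref{prop9.5}--\ref{prop9.7} from Hypothesis~3 using duality.

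For \ref{prop9.1}--\ref{prop9.2}, start from the defining inequality $|u(x,y)| \le p_U(x)\,p_V(y)$ characterizing $u \in \cB(E,F)$. Setting $A \coleq U^{\circ}$ and $B \coleq V^{\circ}$, weakly closed equicontinuous, $u$ descends to a continuous bilinear form on $\widehat E_U \times \widehat F_V$, i.e.\ a member of $(\widehat E_U)'_b \widehat\otimes_\e (\widehat F_V)'_b = E'_A \widehat\otimes_\e F'_B$. Nuclearity of $E$ lets us shrink $U$ so that $\Phi_U \colon \widehat E_{U_1} \to \widehat E_U$ is nuclear; transposing, $u$ is already a nuclear bilinear form, so it lifts to $u_0 \in E'_{A_1} \widehat\otimes_\pi F'_B$. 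Composition with the canonical map $E'_{A_1} \widehat\otimes_\pi F'_B \to E' \wideparen\otimes_\iota F'$ produces $u_\iota$. Compatibility under refinements of $(A,B)$ shows that the assignment $u \mapsto u_\iota$ is well-defined and injective. For \ref{prop9.3}, one notes that $E' \otimes F' \subseteq \cB(E,F)$ as elementary tensor forms, and $E' \otimes F'$ is dense in $E' \widehat\otimes_\iota F'$ by definition of the completion; strict density in $E' \wideparen\otimes_\iota F'$ comes from the fact that each $u_0$ already lies in an equicontinuous image.

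For \ref{prop9.4}, the goal is to dominate each $t_b$-neighborhood by a $t_\iota$-neighborhood. Given a bounded set $M \subset E \wideparen\otimes_\pi F$, use Grothendieck's result that $M$ lies in the closed absolutely convex hull of $A \otimes B$ for bounded $A \subset E$, $B \subset F$. Here the \emph{strict approximation property} of $E$, together with its nuclearity, permits us to approximate the inclusion $A \hookrightarrow E$ uniformly by finite-rank maps of the form $x \mapsto \sum_k \langle x, e'_k\rangle e_k$ with the family $\{e'_k\}$ equicontinuous in $E'$; passing to the transposes produces an equicontinuous subset $H_1 \subset E'$, paired with an equicontinuous $H_2 \subset F'$ coming from $B$, whose associated $\iota$-neighborhood is absorbed by $M^\circ$. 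Quasicompleteness of $E$ and $F$ is used precisely to ensure that the approximating nuclear sums converge in $E'_A \widehat\otimes_\pi F'_B$. This is the technical heart of the argument and the step I expect to be the main obstacle, since one must stay in the quasicompleted world without slipping into the full completion.

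For Hypothesis~3, semireflexivity of $F$ yields $F = (F'_b)'_\sigma$; bounded sets of $F$ then correspond to equicontinuous subsets of the bidual, which reverses the comparison in \ref{prop9.4} to give $t_\iota \le t_b$, hence equality in \ref{prop9.5}. Density from \ref{prop9.3} identifies $E'\widehat\otimes_\iota F'$ (resp.\ $E' \wideparen\otimes_\iota F'$) as the completion (resp.\ quasi-completion) of $(\cB(E,F), t_b) = (E \wideparen\otimes_\pi F)'_b$, which yields \ref{prop9.6} immediately from the uniqueness of (quasi-)completions. Finally, \ref{prop9.7} follows from Grothendieck's permanence result for semireflexivity of projective tensor products of a nuclear and a semireflexive space \cite[Chap.~II, \S 3, n°2, Proposition 13, p.~76]{zbMATH03199982}, applied to the quasi-completed projective tensor product and combined with \ref{prop9.6} (this is exactly the form invoked in the proof sketch for Proposition~\ref{prop9}).
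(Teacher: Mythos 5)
Your handling of \ref{prop9.1}--\ref{prop9.3} and of \ref{prop9.5}--\ref{prop9.7} runs broadly parallel to the paper's proof (nuclear representation of $u$ via the nuclearity of $E$, density of $E'\otimes F'$, a Mackey-type argument for the reverse topological inequality under semireflexivity, identification of the completion and quasi-completion). One caveat on \ref{prop9.2}: you dispose of injectivity with ``compatibility under refinements'', but the actual issue is whether the canonical map $E'_A\widehat\otimes_\pi F'_B\to\cB(\widehat E_U,\widehat F_V)$ is injective once $u_0$ is known to vanish on $\widehat E_U\times\widehat F_V$; the paper secures this by choosing $U$ so that $\widehat E_U$ is a Hilbert space (hence reflexive with the approximation property). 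This is Grothendieck's approximation problem in disguise and cannot be waved away.

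The genuine gap is in \ref{prop9.4}. You base $t_\iota\ge t_b$ on the claim that every bounded set $M\subset E\wideparen\otimes_\pi F$ is contained in the closed absolutely convex hull of $A\otimes B$ with $A\subset E$, $B\subset F$ bounded. That decomposition property is not available under Hypotheses 1 and 2: Grothendieck's Proposition 12 (Chap.~II, \S 3, n°1) yields it only in restricted situations (essentially metrizable factors, one of them nuclear), and the paper itself treats such decomposability of bounded sets as a nontrivial property to be checked case by case (see the remark following Proposition \ref{prop8}). Note also that if the decomposition did hold, your finite-rank approximation step would be superfluous, since $(A\otimes B)^\circ$ is already an $\e$-neighborhood and a fortiori a $t_\iota$-neighborhood. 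The paper's route is different and avoids the decomposition entirely: quasicompleteness and the strict approximation property of $E$ enter only through Lemma \ref{prop9.lem2}, which identifies $E\wideparen\otimes_\pi F$ with $\cB^h_\e(E'_c,F'_c)$; one then shows directly that a bounded subset of this space is \emph{separately equicontinuous} as a set of bilinear forms on $E'_b\times F'_b$ (for fixed $f'$ the set $B(\cdot,f')$ is bounded on all equicontinuous subsets of $E'$, hence by the bipolar theorem and $(E'_c)'=E$ it is a bounded subset of $E$, whose polar is a strong zero-neighborhood in $E'_b$), so that its polar in $\cB(E,F)$ is itself a $t_\iota$-neighborhood coinciding with the given strong-dual neighborhood. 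You would need to replace your argument for \ref{prop9.4} by one of this kind.
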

\begin{proof}
We shall modify the proof of Grothendieck and give more details.

\ref{prop9.1} If $u \in \cB(E,F)$ then the nuclearity of $E$ implies the existence of zero-neighborhoods $U$ in $E$ and $V$ in $F$ and of sequences $(e_n')$ in $E'_{U^\circ}$ and $(f_n')$ in $F'_{V^\circ}$ such that
\begin{equation}\label{prop9.eq1}
 \sum_{n=1}^\infty \norm{e_n'}_{U^\circ} \norm{f_n'}_{V^\circ} < \infty
\end{equation}
and
\begin{equation}\label{prop9.eq2}
 u(e,f) = \sum_{n=1}^\infty \langle e, e_n' \rangle \langle f, f_n' \rangle \qquad \forall (e,f) \in E \times F
\end{equation}
by \cite[Chap.~II, \S 2, n°1, Corollaire 4 to Th\'eor\`eme 6, p.~39]{zbMATH03199982} or \cite[21.3.5, p.~487]{Jarchow}. Setting $A \coleq U^\circ$, $B \coleq V^\circ$ we then define $u_0 \in E_A' \widehat\otimes_\pi E_B'$ by
\[ u_0 \coleq \sum_{n=1}^\infty e_n' \otimes f_n'. \]
The series in \eqref{prop9.eq2} converges because
\[ \sum_{n=1}^\infty \abso{ \langle e, e_n' \rangle \langle f, f_n' \rangle } \le \left( \sum_{n=1}^\infty \norm{ e_n' }_{U^\circ} \norm{f_n'}_{V^\circ} \right) \cdot \norm{e}_U \cdot \norm{f}_V \]
due to Lemma \ref{prop9.lem1} and inequality \eqref{prop9.eq1}. Moreover, $u_0 = \sum e_n' \otimes f_n'$ converges in the Banach space $E_A' \widehat\otimes_\pi E_B'$:
\begin{align*}
 \norm{u_0} &= \norm{ \sum_{n=1}^\infty ( e_n' \otimes f_n') } = \inf \left\{\, \sum_{n=1}^\infty \norm{a_n}_{U^\circ} \norm{b_n}_{V^\circ}\ :\  u_0 = \sum_{n=1}^\infty a_n \otimes b_n \,\right\} \\
& \le \sum_{n=1}^\infty \norm{e_n'}_{U^\circ} \norm{f_n'}_{V^\circ}.
\end{align*}
Next let us describe in detail \emph{the canonical mapping} $E_A' \widehat\otimes_\pi F_B' \to \cB(E,F)$ as the composition of the following three mappings
\begin{equation}\label{prop9.eq3}
 E_A' \widehat\otimes_\pi F_B' \to (E_U)' \widehat\otimes_\pi (F_V)' \to (E_U \widehat\otimes_\pi F_V)' \to (E \widehat\otimes_\pi F)' = \cB(E,F).
\end{equation}
For \emph{the first mapping in \eqref{prop9.eq3}} we use that ${}^t \Phi_U \colon (E_U)' \to E_A'$ is an isomorphism with inverse $({}^t \Phi_U)^{-1} \colon E_A' \to (E_U)'$.

The \emph{second mapping in \eqref{prop9.eq3}} is the continuous extension of the linear map on $(E_U)' \otimes (E_V)'$ corresponding to the continuous bilinear map
\begin{align*}
 (E_U)' \times (E_V)' &\to (E_U \widehat\otimes_\pi F_V)', \\
(e', f') &\mapsto e' \otimes f'.
\end{align*}

The \emph{third mapping in \eqref{prop9.eq3}} is given as the transpose of $\Phi_U \otimes \Phi_V \colon E \widehat\otimes_\pi F \to E_U \widehat\otimes_\pi F_V$.

\emph{The image of $u_0$ in $\cB(E,F)$ coincides with $u$:} denoting the image of $u_0$ in all spaces appearing in \eqref{prop9.eq3} by $u_0$ we obtain by going from right to left in the composition above:
\begin{align*}
 u_0(e,f) & = u_0 ( e \otimes f) = u_0 ( \Phi_U(e) \otimes \Phi_V(f)) \\
&= \left( \sum_{n=1}^\infty ( {}^t \Phi_U)^{-1} e_n' \otimes ({}^t \Phi_V)^{-1} f_n'\right)\bigl(\Phi_U(e) \otimes \Phi_V(f)\bigr) \\
&= \sum_{n=1}^\infty \left\langle \Phi_U(e), ( {}^t \Phi_U)^{-1} e_n' \right\rangle \left\langle \Phi_V(f), ({}^t \Phi_V)^{-1} f_n' \right\rangle \\
&= \sum_{n=1}^\infty \langle e, e_n' \rangle \langle f, f_n' \rangle = u(e,f).
\end{align*}

\ref{prop9.2} We also have a canonical mapping
\[ ({}^t \Phi_U) \otimes ( {}^t \Phi_V ) \colon E_A' \widehat\otimes_{\pi} F_B' = E_A' \wideparen\otimes_\iota F_B' \to E' \wideparen \otimes_\iota F' \subset E' \widehat\otimes_\iota F' \]
which maps $u_0$ to an element $u_\iota \in E' \wideparen\otimes_\iota F'$. Due to \[ \left( ({}^t \Phi_U) \otimes ({}^t \Phi_V)\right)(u_0) = u_0 \circ (\Phi_U \otimes \Phi_V)\quad\textrm{in }\cB(E,F) \]
we conclude from $u_\iota = 0$ that $u_0$ vanishes on $\widehat E_U \times \widehat F_V$. By \cite[Chap.~I, \S 5, n°2, Corollaire 1, p.~181]{zbMATH03199982} or \cite[Theorem 18.3.4, p.~406]{Jarchow} the canonical mapping
\begin{align*}
 E_A' \widehat\otimes F_B' &\to \cB ( \widehat E_U, \widehat F_V)
\end{align*}
is injective because the zero-neighborhood $U$ can be chosen in such a manner that $\widehat E_U$ is a Hilbert space \cite[Chap.~II, \S 2, n°1, Lemme 3, p.~37]{zbMATH03199982}, and, therefore, $\widehat E_U$ is reflexive and has the approximation property. Hence, the vanishing of $u_0$ on $\widehat E_U \times \widehat F_V$ implies $u_0=0$ and a fortiori $u=0$.

\ref{prop9.3} follows from $E' \otimes F' \subset \cB(E,F) \subset E' \wideparen \otimes_\iota F' \subset E' \widehat\otimes_\iota F'$.

\ref{prop9.4} If, in addition, Hypothesis 2 is fulfilled, we obtain by Lemma \ref{prop9.lem2} below that $E \wideparen\otimes_\pi F \cong \cB_\e^h (E_c', F_c')$. Two topologies on $\cB(E,F)$ can be defined corresponding to the following two dual systems (cf.~\cite[Chap.~III, \S 2, p.~46]{MR0350361} or \cite[Chap.~3, \S 2, p.~183]{zbMATH03230708}):
\[
 \xymatrix@C=1ex{
\bigl ( \cB(E,F) \ar[d]_j , & E \wideparen\otimes_\pi F \ar@{}[r]|-*[@]\txt{$\cong$} & \cB_\e^h ( E'_c, F'_c ) \ar[d]_m \bigr ) \\
\bigl ( E' \wideparen\otimes_\iota F',  & (E' \wideparen\otimes_\iota F')'_\e \ar@{}[r]|-*[@]\txt{$\cong$} & \cB_\e^s ( E', F') \bigr ).
}
\]
The mapping $j$ is defined as the injection
\[ \cB(E,F) \to E' \wideparen \otimes_\iota F',\quad u \mapsto \sum_{n=1}^\infty e_n' \otimes f_n' \]
investigated in \ref{prop9.1} and \ref{prop9.2}.

The mapping $m$ is the injection of the space $\cB^h(E_c', F_c')$ of hypocontinuous bilinear forms on $E_c' \times F_c'$ into the space $\cB^s(E', F')$ of separately continuous bilinear forms on $E' \times F'$. Furthermore, we have canonical isomorphisms
\begin{align*}
 k \colon E \wideparen \otimes_\pi F &\to \cB^h_\e ( E_c', F_c'),\quad z \mapsto [ (e', f') \mapsto \langle e' \otimes f', z \rangle ],\\
\ell \colon \cB^s ( E', F') & \to (E' \widehat\otimes_\iota F')',\quad \ell(w)(e' \otimes f') = w(e', f').
\end{align*}
On the one hand, we consider on $\cB(E,F)$ the relative topology $t_\iota$ with respect to the embedding $j$. Because the topology of $E' \wideparen \otimes_\iota F'$ is the topology of uniform convergence on equicontinuous subsets of $(E' \wideparen\otimes_\iota F')' = \cB^s(E', F')$ and equicontinuous subsets of this dual space are precisely the equicontinuous subsets of $\cB^s(E', F')$ \cite[Chap.~I, \S 3, n°1, Proposition 13, p.~73]{zbMATH03199982} we see that a zero-neighborhoodbase of $t_\iota$ is given by the sets $j^{-1}(\ell(B')^\circ)$ with separately equicontinuous subsets $B'$ of $\cB^s(E', F')$.

Let us now show that $t_\iota$ is finer than $t_b$; in particular, that for a given bounded set $B$ in $E \wideparen\otimes_\pi F$ the set $B' \coleq m(k(B))$ is separately equicontinuous and that $j^{-1}(\ell(B')^\circ) = B^\circ$:

If $B \subset E \wideparen\otimes_\pi F \cong \cB^h_\e ( E_c', F_c' )$ is bounded the set $B(.,f')$, for fixed $f' \in F'$, is bounded on all equicontinuous sets $U^\circ$, $U$ an absolutely convex, closed zero-neighborhood in $E$, i.e., $\forall U$ $\exists \lambda_U > 0$ such that
\[ \sup_{e' \in U^\circ} \abso { B(e', f') } \le \lambda_U, \]
so $B(., f') \subset \lambda_U U^{\circ\circ} = \lambda_U U$ due to $(E'_c)' = E$. Hence, $B(., f')$ is bounded in $E$ and $B(.,f')^\circ$ is a zero-neighborhood in $E'_b$, which means that $B(.,f')$ is equicontinuous on $E'_b$. Similarly, $B(e', .)$ is equicontinuous on $F'_b$ for $e' \in E'$ fixed. Therefore, $B' \coleq m(k(B))$ is a separately equicontinuous subset of $\cB^s(E', F')$ which implies that $(\ell(B'))^\circ$ is a zero-neighborhood in $t_\iota$, which together with $j^{-1}(\ell(B')^\circ) = B^\circ$ will imply $t_b \le t_\iota$.

In order to prove $j^{-1}(\ell(B')^\circ)=B^\circ$ for a given bounded subset $B \subset E \wideparen \otimes_\pi F$ with $B' = m(k(B))$ we write down the involved mappings explicitly. Let $u \in \cB(E,F)$ with $j(u) = \sum_{i=1}^\infty e_n' \otimes f_n'$ and $z \in E \wideparen\otimes_\pi F$. Then,

\newcommand*\circled[1]{\tikz[baseline=(char.base)]{\node[shape=circle,draw,inner sep=1pt] (char) {#1};}}
\newcommand*\smallcircled[1]{\tikz[baseline=(char.base)]{\node[shape=circle,draw,inner sep=1pt] (char) {\tiny #1};}}

\begin{align*}
 \langle j(u), \ell(m(k(z)))\rangle & \stackrel{\smallcircled{1}}{=} \ell ( m ( k(z)))\left(\sum_{n=1}^\infty e_n' \otimes f_n'\right) \\
 & \stackrel{\smallcircled{2}}{=} \sum_{n=1}^\infty \ell ( m ( k(z)))(e_n' \otimes f_n') \stackrel{\smallcircled{3}}{=} \sum_{n=1}^\infty m ( k(z))(e_n', f_n') \\
 & \stackrel{\smallcircled{4}}{=} \sum_{n=1}^\infty k(z)(e_n', f_n') \stackrel{\smallcircled{5}}{=} \sum_{n=1}^\infty (e_n' \otimes f_n')(z) \stackrel{\smallcircled{6}}{=} \langle u, z \rangle
\end{align*}

\circled{1} is the definition of $j$, \circled{2} follows from the continuity of $\ell(m(k(z)))$. \circled{3}, \circled{4} and \circled{5} are the definitions of $\ell$, $m$ and $k$, respectively. The equality \circled{6} is a consequence of the equality for $z$ in the strictly dense subspace $E \otimes F$ of $E \wideparen\otimes F$ and the continuity of
\[ z \mapsto \sum_{n=1}^\infty ( e_n' \otimes f_n')(z) \]
which follows from the representation
\[ \sum_{n=1}^\infty (e_n' \otimes f_n')(z) = \sum_{n=1}^\infty \norm{e_n'}_{U^\circ} \norm{f_n'}_{V^\circ} \left( \frac{e_n'}{\norm{e_n'}_{U^\circ}} \otimes \frac{f_n'}{\norm{f_n'}_{V^\circ}} \right)(z) \]
and the inequality (using Lemma \ref{prop9.lem1})
\begin{align*}
\abso{\sum_{n=1}^\infty ( e_n' \otimes f_n')(z)} & \le \sum_{n=1}^\infty \norm{e_n'}_{U^\circ} \norm{f_n'}_{V^\circ} \abso{ \left( \frac{e_n'}{\norm{e_n'}_{U^\circ}} \otimes \frac{f_n'}{\norm{f_n'}_{V^\circ}}\right)(z)} \\
& \le \sum_{n=1}^\infty \norm{e_n'}_{U^\circ} \norm{f_n'}_{V^\circ} \norm{ \frac{e_n'}{\norm{e_n'}_{U^\circ}} \otimes \frac{f_n'}{\norm{f_n'}_{V^\circ}} }_{U^\circ \otimes V^\circ} \norm{z}_{U \otimes V} \\
& \le \left( \sum_{n=1}^\infty \norm{e_n'}_{U^\circ} \norm{f_n'}_{V^\circ} \right) \norm{z}_{U \otimes V}.
\end{align*}
Note that the sets $\{ e_n' / \norm{e_n'}_{U^\circ} \}$ and $\{ f_n' / \norm{f_n'}_{V^\circ} \}$ are equicontinuous and hence also their tensor product is equicontinuous by \cite[p.~14]{zbMATH03145499}.

\ref{prop9.5} The space $E$ is nuclear and quasicomplete and, hence, $E$ is semireflexive (see \cite[Prop.~3, exp.~17, p.~5]{semschwartz}, \cite[Chap.~II, Cor.~1, p.~38]{zbMATH03199982} and \cite[Cor.~to Prop.~3.15.4, p.~277 and Prop.~3.9.1, p.~231]{zbMATH03230708}). Thus, with Hypothesis 3 both spaces $E$ and $F$ are semireflexive. Moreover, by \cite[Chap.~I, p.~11]{zbMATH03199982}:
\begin{align*}
 \cB^s ( E_b', F_b' ) &= \cL(E_b', (F_b')'_\sigma) = \cL ( E_b', (F_\sigma')'_\sigma) \\
 &= \cB^s ( E_b', F_\sigma') = \dotsc = \cB^s ( E_\sigma', F_\sigma'),\\
 \cB^s(E_c', F_c') &= \cB^s ( E_\sigma', F'_\sigma) \textrm{ and }\\
 \cB^h (E_b', F_b') &= \cB^s(E_b', F_b'),
\end{align*}
because $E'_b$ and $F'_b$ are barrelled \cite[Prop.~3.8.4, p.~228]{zbMATH03230708}. Hence,
\[ \cB_\e^h ( E_c', F_c') = \cB_\e^h ( E_b', F_b') = \cB^s_\e (E_\sigma', F_\sigma').\]
Let us show that $t_b$ is finer than $t_\iota$, $t_b \ge t_\iota$: if $B \subset \cB^s(E', F')$ is separately equicontinuous, $B$ is pointwise bounded such that Mackey's theorem \cite[IV.1, Prop.~1 (ii)]{zbMATH03757085} implies its boundedness in $\cB^s_\e( E_\sigma', F_\sigma')$, i.e., $B$ is bounded in $E \wideparen\otimes_\pi F$. In virtue of \ref{prop9.3}, $E' \widehat\otimes_\iota F'$ is the completion of $\cB(E,F)$ and $E' \wideparen \otimes_\iota F'$ its quasi-completion.

\ref{prop9.6} follows from $(E \wideparen\otimes_\pi F)' = \cB(E,F)$.

\ref{prop9.7} is a consequence of \cite[Chap.~I, \S 4, n°2, Corollaire of Prop.~24, p.~118]{zbMATH03199982}.
\end{proof}

\begin{lemma}\label{prop9.lem1}
 Let $U$ be an absolutely convex zero-neighborhood in $E$, $e' \in E'_{U^\circ}$ and $e \in E$. Then $\abso{ \langle e,e' \rangle} \le \norm{e}_U \norm{e'}_{U^\circ}$.
\end{lemma}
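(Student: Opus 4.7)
The inequality is the classical polarity/duality estimate between the Minkowski functional of an absolutely convex zero-neighborhood $U \subset E$ and the Minkowski functional of its polar $U^\circ \subset E'$. The plan is simply to unwind both gauges by their definitions.

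First I would recall that $\norm{e}_U = \inf\{\lambda > 0 : e \in \lambda U\}$ is the gauge of $U$ inducing the seminorm on $E_U$, and that $\norm{e'}_{U^\circ} = \inf\{\mu > 0 : e' \in \mu U^\circ\}$ is the gauge of $U^\circ$ that serves as the norm on $E'_{U^\circ}$. Using the defining relation $U^\circ = \{g' \in E' : \abso{\langle f, g'\rangle} \le 1 \ \forall f \in U\}$, together with the absolute convexity of $U$ (which makes $U^\circ$ itself absolutely convex and allows the absolute value in this definition), one obtains the dual representation $\norm{e'}_{U^\circ} = \sup_{f \in U} \abso{\langle f, e'\rangle}$.

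Next, for any $\lambda > \norm{e}_U$ one has $e/\lambda \in U$, so applying the supremum representation to $e/\lambda$ yields $\abso{\langle e/\lambda, e'\rangle} \le \norm{e'}_{U^\circ}$, i.e., $\abso{\langle e,e'\rangle} \le \lambda \norm{e'}_{U^\circ}$. Letting $\lambda$ decrease to $\norm{e}_U$ gives the asserted inequality. The degenerate cases $\norm{e}_U = 0$ (the inequality then forces $\langle e,e'\rangle = 0$, since it holds for every $\lambda > 0$) and $\norm{e}_U = \infty$ (in which case the right-hand side is $+\infty$) are subsumed by the same argument under the usual conventions on $0 \cdot \infty$.

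There is really no obstacle here: once the two gauges are identified with their primal and dual forms, the inequality is tautological. The only point worth flagging is that the absolute-value version $\abso{\langle f,g'\rangle} \le 1$ in the description of $U^\circ$, and hence the sup-formula for $\norm{\cdot}_{U^\circ}$, depends on $U$ being absolutely convex, as is assumed in the statement.
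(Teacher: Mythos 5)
Your argument is correct and coincides with the paper's own proof: the paper's ``more elementary'' alternative uses exactly the same steps, namely the sup-representation $\norm{e'}_{U^\circ}=\sup_{f\in U}\abso{\langle f,e'\rangle}$ applied to $e/(\norm{e}_U+\e)$ and then letting $\e\to 0$, which is your $\lambda\downarrow\norm{e}_U$. The paper also mentions a second route (restricting the pairing to the Banach couple $E_U\times E'_{U^\circ}$ with $(E_U)'=E'_{U^\circ}$), but your chosen path is literally the one written out there.
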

\begin{proof}
 The inequality follows either by restricting the scalar product on $E \times E'$ to the Banach spaces $E_U \times E'_{U^\circ}$ (taking into account that $(E_U)' = E'_{U^\circ}$) or, more elementary:
 \begin{align*}
  \norm{e'}_{U^\circ} = \sup_{e \in U}\abso{ \langle e, e' \rangle} & \Longrightarrow \forall e \in U: \abso{ \langle e, e'\rangle} \le \norm{e'}_{U^\circ} \\
  & \Longrightarrow \forall e \in E, \forall \e > 0: \abso{\langle \frac{e}{\norm{e}_U + \e}, e' \rangle } \le \norm{e'}_{U^\circ},
 \end{align*}
i.e., $\abso{\langle e,e' \rangle} \le \norm{e}_U \norm{e'}_{U^\circ}$.
\end{proof}

\begin{lemma}\label{prop9.lem2}
 By assuming the hypotheses 1 and 2 on $E$ and $F$ we have $E \wideparen \otimes_\pi F \cong \cB_\e^h(E_c', F_c')$.
\end{lemma}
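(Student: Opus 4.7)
My plan is to identify $\cB^h_\e(E'_c, F'_c)$ as the quasi-completion of $E \otimes F$ equipped with the topology induced by the canonical embedding. The proof will have three pieces: (a) a canonical map $J\colon E\otimes F \hookrightarrow \cB^h_\e(E'_c, F'_c)$ which is a topological embedding for the $\pi$-topology on the domain; (b) strict density of the image; (c) quasi-completeness of the target.

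\textbf{Canonical embedding.} I would define $J\colon E \otimes F \to \cB(E'_c, F'_c)$ by $J(e\otimes f)(e', f') = \langle e,e'\rangle\langle f,f'\rangle$ and extend linearly. Pure tensors give continuous, hence hypocontinuous, bilinear forms, so $J$ lands in $\cB^h(E'_c,F'_c)$. Since $E$ is nuclear one has $E\otimes_\pi F = E\otimes_\e F$, and the $\e$-topology is by definition uniform convergence on products of equicontinuous subsets of $E'\times F'$. Because equicontinuity in $E'$ and in $(E'_c)'\cong E$ coincide (and likewise for $F$), this is precisely the topology induced from $\cB^h_\e(E'_c,F'_c)$; thus $J$ is a topological embedding with respect to $\pi=\e$ on the domain.

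\textbf{Strict density via the strict approximation property.} Because $F$ is quasi-complete, every $u\in \cB^h_\e(E'_c,F'_c)$ corresponds to a continuous linear $\tilde u\colon E'_c \to F$ (hypocontinuity together with $(F'_c)'=F$ provides the values in $F$). The strict approximation property of $E$ furnishes an equicontinuous net $(T_\alpha)$ of continuous finite-rank endomorphisms of $E$ converging to $\Id_E$ uniformly on absolutely convex compact subsets. Then $\tilde u\circ {}^tT_\alpha\colon E'_c\to F$ has finite-dimensional range, hence corresponds canonically to an element $u_\alpha \in E\otimes F$. The equicontinuity of $(T_\alpha)$, together with the fact that equicontinuous sets in $(E'_c)'$ are contained in absolutely convex compact subsets of $E$, ensures that $u_\alpha\to u$ uniformly on products of equicontinuous sets in $E'_c\times F'_c$, i.e.\ in $\cB^h_\e(E'_c,F'_c)$. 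This is the strict density statement.

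\textbf{Quasi-completeness and conclusion.} It remains to verify that $\cB^h_\e(E'_c,F'_c)$ is quasi-complete, which follows from the quasi-completeness of $F$: a Cauchy bounded net in $\cB^h_\e$ gives, for each $e'$, a Cauchy bounded net in $F$ (hence convergent), and the uniform Cauchy condition on bi-equicontinuous sets yields separate hypocontinuity of the limit. Combined with (a) and (b), the canonical injection $J$ extends to an isomorphism $E \wideparen\otimes_\pi F \cong \cB^h_\e(E'_c,F'_c)$.

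\textbf{Main obstacle.} The delicate point is (b): one must check not merely pointwise convergence $\tilde u \circ {}^tT_\alpha\to \tilde u$ but uniformity on bi-equicontinuous sets. This is precisely where ``strict'' in the strict approximation property is used, and where the choice of the topology of $E'_c$ (compact convergence, so that equicontinuous subsets of its dual sit inside compact sets of $E$) is essential; the plain approximation property would only give weak density and would not suffice for the topological identification.
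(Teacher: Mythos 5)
Your argument is correct and follows essentially the same route as the paper: nuclearity gives $E\otimes_\pi F = E\otimes_\e F$, and then quasi-completeness of $E$ and $F$ together with the strict approximation property of $E$ identify the quasi-completion with the $\e$-product $E \bine F = \cB^h_\e(E'_c,F'_c)$. The only difference is that the paper simply cites Schwartz (Théorie des distributions à valeurs vectorielles, Corollaire 1, p.~47) for this second step, whereas you re-derive it (embedding, strict density via an equicontinuous net of finite-rank operators, quasi-completeness of the target), and your derivation is sound.
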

\begin{proof}
 The nuclearity of $E$ implies that $E \wideparen\otimes_\pi F = E \wideparen\otimes_\e F$. By the quasi-completeness of $E$ and $F$ and the strict approximation property of $E$, we see that $E \wideparen\otimes_\e F = E \bine F = \cB^h_\e(E_c', F_c')$ \cite[Corollaire 1, p.~47]{zbMATH03145498}.
\end{proof}

\section{Decomposition of the space \texorpdfstring{$\dot\cB_{xy}'$}{Ḃ\textunderscore xy}}\label{sec4}

As mentioned in the introduction the decomposition
\[ \dot\cB_{xy} = \dot\cB \widehat\otimes_\e \dot\cB_y \]
 is proven in \cite[Proposition 17, p.~59]{zbMATH03145498}. It is an analogue of A.~Grothendieck's example
\[ \cC_0 (M \times N) = \cC_0(M) \widehat\otimes_\e \cC_0(N) \]
for locally compact topological spaces $M$ and $N$ \cite[Chap.~I, p.~90]{zbMATH03199982}. Thus, it seems to us of its own interest to state an analogue decomposition of $\dot\cB'_{xy}$, besides its applicability in proving the equivalence \eqref{eq1} $\Longleftrightarrow$ \eqref{eq3} hinted at in the introduction.

\begin{proposition}[Decomposition of $\dot\cB'_{xy}$]\label{prop11}
\[ \dot\cB'_{xy} = \dot\cB'_x \widehat\otimes_\e \dot\cB'_y = \dot\cB'_x \bine \dot\cB'_y = \dot\cB_x'(\dot\cB'_y). \]
\end{proposition}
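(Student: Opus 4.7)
The plan is to work through the chain from right to left. The rightmost equality $\dot\cB'_x \bine \dot\cB'_y = \dot\cB'_x(\dot\cB'_y)$ is immediate from the preliminaries, where the symbol $E(F)$ was defined as $E \bine F$. The substantive content therefore splits into (B) the kernel identity $\dot\cB'_{xy} = \dot\cB'_x \bine \dot\cB'_y$, and (A) the approximation step $\dot\cB'_x \widehat\otimes_\e \dot\cB'_y = \dot\cB'_x \bine \dot\cB'_y$.

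For (B) my starting point is Schwartz's structure theorem \cite[Remarque 2°, p.~202]{TD}, already used in the proof of Proposition \ref{prop5bis}: every $K \in \dot\cB'_{xy}$ admits, for some $m \in \bN_0$, a representation $K = \sum_{|\gamma| \le m} \pd^\gamma f_\gamma$ with $f_\gamma \in \cC_0(\bR^{n+n'})$. Combining this with Grothendieck's identity $\cC_0(\bR^{n+n'}) = \cC_0(\bR^n) \widehat\otimes_\e \cC_0(\bR^{n'})$ \cite[Chap.~I, p.~90]{zbMATH03199982} and the splitting $\gamma = (\alpha, \beta)$ yields, after an $\ell^1$-weighted expansion of each $f_\gamma$ into simple tensors $g_{\nu,\gamma} \otimes h_{\nu,\gamma}$ and a termwise differentiation, a convergent representation of $K$ as an element of $\dot\cB'_x \bine \dot\cB'_y$. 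The reverse inclusion reduces to the separate continuity of the bilinear map $(S,T) \mapsto S(x)T(y) \colon \dot\cB'_x \times \dot\cB'_y \to \dot\cB'_{xy}$, which extends to the $\bine$-product through the realization $\dot\cB'_x \bine \dot\cB'_y \cong \cL_\e((\dot\cB'_y)_c', \dot\cB'_x)$.

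For (A) I would invoke Bargetz's sequence-space representation $\dot\cB' \cong c_0 \widehat\otimes s'$ \cite[Theorem 3]{bargetz} (already used in the proof of Proposition \ref{prop7}). Since $s'$ is nuclear and $c_0$ has a Schauder basis, $\dot\cB'$ inherits the approximation property, so that the natural embedding $\dot\cB'_x \widehat\otimes_\e \dot\cB'_y \hookrightarrow \dot\cB'_x \bine \dot\cB'_y$ has dense image. The target being complete (by (B), as $\dot\cB'_{xy}$ is closed in the complete space $\cD'_{L^\infty,xy}$), both spaces coincide.

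The main obstacle is the topological half of (B): upgrading the ``for some $m$'' quantifier in the structure theorem to a \emph{uniform} control of the order $m$ and of the $\cC_0$-norms of the $f_\gamma$ over a given bounded subset of $\dot\cB'_{xy}$, so that the canonical algebraic bijection becomes a linear topological isomorphism. A cleaner alternative worth pursuing in parallel is to dualize Schwartz's identity $\dot\cB_{xy} = \dot\cB_x \widehat\otimes_\e \dot\cB_y$ \cite[Proposition 17, p.~59]{zbMATH03145498} by applying Proposition \ref{prop9} to $\dot\cB$ with a judicious choice of auxiliary space $F_0$, in the spirit of the second remark following Proposition \ref{prop9}; this would reduce (B) to a duality computation analogous to the one used in the proof of Proposition \ref{prop5}.
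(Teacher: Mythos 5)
Your algebraic argument for (B) contains a step that fails as written: from $f_\gamma\in\cC_0(\bR^{2n})=\cC_0(\bR^n)\widehat\otimes_\e\cC_0(\bR^n)$ you cannot extract an ``$\ell^1$-weighted expansion into simple tensors'' $\sum_\nu\lambda_\nu\, g_{\nu,\gamma}\otimes h_{\nu,\gamma}$ with $(\lambda_\nu)\in\ell^1$ and bounded sequences $g_{\nu,\gamma},h_{\nu,\gamma}$. Such expansions characterize the \emph{projective} tensor product $\cC_0\widehat\otimes_\pi\cC_0$, which is a proper subspace of $\cC_0\widehat\otimes_\e\cC_0$ because these are non-nuclear Banach spaces. (The step is repairable: stay at the level of $\widehat\otimes_\e$ and use that $\pd_x^\alpha\otimes\pd_y^\beta$ acts continuously on $\dot\cB'_x\widehat\otimes_\e\dot\cB'_y$; no expansion into simple tensors is needed.) The second, larger gap is the one you flag yourself: the topological half. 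Making the order $m$ and the $\cC_0$-norms uniform over bounded sets is not needed, and your ``cleaner alternative'' of dualizing $\dot\cB_{xy}=\dot\cB_x\widehat\otimes_\e\dot\cB_y$ via Proposition \ref{prop9} cannot work: Propositions \ref{prop9} and \ref{prop10} require the first factor to be nuclear ($\dot\cB$ is not, so its $\e$- and $\pi$-tensor products differ), and in any case the dual of $\dot\cB_{xy}$ is $\cD'_{L^1,xy}$, not $\dot\cB'_{xy}$, so that duality computes the wrong space.

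The paper's proof is softer on both counts. For the isomorphism it uses the sequence-space representation $\dot\cB'\cong c_0\widehat\otimes s'$ directly: $\dot\cB'_{xy}\cong c_{0,jm}\widehat\otimes s'_{kl}\cong(c_{0,j}\widehat\otimes_\e c_{0,m})\widehat\otimes(s'_k\widehat\otimes s'_l)\cong\dot\cB'_x\widehat\otimes_\e\dot\cB'_y$, where the splitting of $c_{0,jm}$ is exactly Grothendieck's $\cC_0(M\times N)$ identity and the regrouping uses commutativity of the $\bine$-product; this gives a topological isomorphism in one stroke. (A regularization argument, closer in spirit to your structure-theorem route, appears only as an alternative proof of the algebraic equality.) To see that the \emph{canonical} map is a topological isomorphism, the paper proves one continuity direction by an explicit neighborhood comparison --- a $0$-neighborhood $W(B_1,B_2^\circ)$ of $\cL_\e(\cD_{L^1,c,x},\dot\cB'_y)$ is the polar of $B_1\otimes B_2$, which is bounded in $\cD_{L^1,xy}$ since $(\dot\cB')'\cong\cD_{L^1}$ --- and then invokes the open mapping theorem between ultrabornological spaces (both $\dot\cB'_{xy}$ and $c_{0,jm}\widehat\otimes s'_{kl}$ are ultrabornological by the Shiraishi--Hirata permanence result). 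That closed-graph/ultrabornological step is the ingredient missing from your outline. Your part (A) (approximation property of $c_0\widehat\otimes s'$ plus density) is essentially sound, but the completeness of $\dot\cB'_x\bine\dot\cB'_y$ should be quoted directly for the $\e$-product of complete spaces rather than deduced from the not-yet-established identification with $\dot\cB'_{xy}$, which as you present it is circular.
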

\begin{proof}
First, we see that the isomorphism $\dot\cB'_{xy} \cong \dot\cB'_x \widehat\otimes_\e \dot\cB'_y$ is a consequence of the sequence-space representation $\dot\cB' \cong c_0 \widehat\otimes s'$ \cite[Theorem 3, p.~13]{bargetz}, A.~Grothendieck's example above and the commutativity of the $\bine$-product:
\[
 \dot\cB'_{xy} \cong c_{0,jm} \widehat\otimes s'_{kl} \cong (c_{0,j} \widehat\otimes_\e c_{0,m}) \widehat\otimes (s'_k \widehat\otimes s'_l) \cong (c_{0,j} \widehat\otimes s'_k) \widehat\otimes_\e ( c_{0,m} \widehat\otimes s'_l) \cong \dot\cB'_x \widehat\otimes_\e \dot\cB'_y.
\]

Alternatively, for the algebraic equality $\dot\cB'_{xy} = \dot\cB'_x \widehat\otimes_\e \dot\cB'_y$, the characterization of $\dot\cB'$ by regularization yields for $K(x,y) \in \cD'_{xy}$:
\begin{align*}
 K(x,y) \in \dot\cB'_{xy} & \Longleftrightarrow K(x-z, y-w) \in \cD'_{zw} \widehat\otimes \dot\cB_{xy}
\intertext{and, hence, by \cite[Prop.~17, p.~59 and Prop.~28, p.~98]{zbMATH03145498}}
& \Longleftrightarrow K(x-z, y-w) \in (\cD'_z \widehat\otimes \cD'_w) \widehat\otimes (\dot\cB_x \widehat\otimes_\e \dot\cB_y).
\intertext{By the commutativity of the $\bine$-product we obtain}
& \Longleftrightarrow K(x-z, y-w) \in \cD'_z ( \dot\cB_x ( \cD'_w \widehat\otimes \dot\cB_y)),
\intertext{and by the vectorial regularization property \cite[Proposition 15, p.~11]{RCD}}
& \Longleftrightarrow K(x, y-w) \in \dot\cB'_x ( \cD'_w \widehat\otimes \dot\cB_y) = (\cD'_w \widehat\otimes \dot\cB_y)(\dot\cB'_x) \\
& \Longleftrightarrow K(x,y) \in \dot\cB'_y \widehat\otimes_\e \dot\cB'_x = \dot\cB'_y ( \dot\cB'_x) = \dot\cB'_x ( \dot\cB'_y).
\end{align*}

For the topological equality we show that any 0-neighborhood in $\dot\cB'_x \bine \dot\cB'_y = \cL_\e ( \cD_{L^1, c, x}, \dot\cB'_y)$ is a neighborhood in $\dot\cB'_{xy}$, i.e., the topology of $\dot\cB'_{xy}$ is finer than the topology on $\dot\cB'_{xy}$ induced by $\dot\cB'_x \bine \dot\cB'_y$, i.e., $\Id \colon \dot\cB'_{xy} \to \dot\cB'_x \widehat\otimes_\e \dot\cB'_y$
is continuous: a base of 0-neighborhoods in $\cL_\e ( \cD_{L^1, c, x}, \dot\cB'_x)$ is given by means of bounded subsets $B_1 \subset \cD_{L^1,x}, B_2 \subset \cD_{L^1,y}$ in the form of sets
\begin{align*}
W(B_1, B_2^\circ) & = \{\, K(x,y) \in \dot\cB'_x \bine \dot\cB'_y : K(B_1) \subset B_2^\circ\,\} \\
& = \{\, K \in \dot\cB'_{xy} : \abso{K(B_1, B_2)} \le 1\,\}
\end{align*}
But the set $W(B_1, B_2^\circ)$ is a 0-neighborhood in $\dot\cB_{xy}$ because $B_1 \otimes B_2$ is a bounded set in $\cD_{L^1,xy}$. Hereby we made use of $(\dot\cB')' \cong \cD_{L^1}$.

Due to the isomorphism
\[ \dot\cB'_x \widehat\otimes_\e \dot\cB'_y \cong c_{0,jm} \widehat\otimes s'_{kl} \]
the space $\dot\cB'_x \widehat\otimes_\e \dot\cB'_y$ is ultrabornological (apply \cite[Proposition 2, p.~75]{zbMATH03312960}). As seen in the proof (2.) of Proposition \ref{prop7}, the space $\dot\cB'_{xy}$ is also ultrabornological. Hence, \cite[Chap.~I, Th\'eor\`eme B, p.~17]{zbMATH03199982} implies that $\Id$ is an isomorphism.
\end{proof}

{\bfseries Acknowledgments. } E.~A.~Nigsch was supported by the Austrian Science Fund (FWF) grants P23714 and P26859.

\newcommand{\bibarxiv}[1]{arXiv: \href{http://arxiv.org/abs/#1}{\texttt{#1}}}
\newcommand{\bibdoi}[1]{{\sc doi:} \href{http://dx.doi.org/#1}{\texttt{#1}}}


\begin{thebibliography}{10}

\bibitem{RCD}
C.~Bargetz, E.~A.~Nigsch, N.~Ortner.
\newblock ``Convolvability and regularization of distributions.''
\newblock Submitted for publication, 2015.
\newblock \bibarxiv{1505.04599}.

\bibitem{bargetz}
C.~Bargetz.
\newblock ``Completing the {V}aldivia--{V}ogt tables of sequence-space
  representations of spaces of smooth functions and distributions.''
\newblock {\em Monatsh. Math.} 177.1 (2015), pp.~1--14.
\newblock {\sc issn:} 0026-9255.
\newblock \bibdoi{10.1007/s00605-014-0650-2}.

\bibitem{zbMATH03498641}
K.~D.~Bierstedt, R.~Meise.
\newblock ``Induktive {L}imites gewichteter {R}äume stetiger und holomorpher
  {F}unktionen.'' 
\newblock {\em J. Reine Angew. Math.} 282 (1976), pp.~186--220,
\newblock {\sc issn:} 0075-4102.

\bibitem{zbMATH03757085}
N.~Bourbaki.
\newblock {\em {E}spaces vectoriels topologiques. Chapitres 1 \`a 5.}
\newblock \'Elem\'ents de math\'ematique.
\newblock Paris: Masson, 1981. 
\newblock {\sc isbn:} 978-2-225-68410-4.

\bibitem{zbMATH03092845}
A.~Grothendieck.
\newblock ``Sur les espaces $(F)$ et $(DF)$.''
\newblock {\em Summa Brasil. Math.} 3 (1954), pp.~57--121.

\bibitem{zbMATH03199982}
A~Grothendieck.
\newblock ``Produits tensoriels topologiques et espaces nuclé\-aires.''
\newblock {\em Mem. Am. Math. Soc.} 16 (1955).
\newblock {\sc issn:} 0065-9266.

\bibitem{zbMATH03163214}
Y.~Hirata, R.~Shiraishi.
\newblock ``On partial summability and convolutions in the theory of vector
  valued distributions.''
\newblock {\em J. Sci. Hiroshima Univ., Ser. A} 24 (1960), pp.~535--562.

\bibitem{zbMATH03230708}
J.~Horváth.
\newblock {\em Topological vector spaces and distributions.}
\newblock Vol.~1.
\newblock Reading, Mass.: Addison-Wesley, 1966.
\newblock {\sc isbn:} 978-0-486-48850-9.

\bibitem{HorvathSI}
J.~Horváth.
\newblock ``Viejos y nuevos resultados sobre integrales singulares e
  hipersingulares.''
\newblock {\em Rev. Acad. Colomb. Cienc.} 29.113 (2005), pp.~547--569.
\newblock {\sc issn:} 0370-3908.

\bibitem{Jarchow}
H.~Jarchow.
\newblock {\em Locally Convex Spaces}.
\newblock Stuttgart: B. G. Teubner, 1981.
\newblock {\sc isbn:} 978-3-519-02224-4.

\bibitem{Koethe1}
G.~Köthe.
\newblock {\em Topological vector spaces {I}}.
\newblock Grundlehren der mathematischen Wissenschaften 159.
\newblock New York: Springer-Verlag, 1969.
\newblock {\sc isbn:} 978-0-387-04509-2.

\bibitem{Koethe2}
G.~Köthe.
\newblock {\em Topological vector spaces {II}}.
\newblock Grundlehren der Mathematischen Wissenschaften 237.
\newblock New York:  Springer-Verlag, 1979.
\newblock {\sc isbn:} 978-0-387-90400-9.

\bibitem{zbMATH03223921}
J.~L.~Lions, E.~Magenes.
\newblock ``Sur certains aspects des problèmes aux limites non homogenes pour
  des opérateurs paraboliques.''
\newblock {\em Ann. Sc. Norm. Super. Pisa, Sci. Fis. Mat., III. Ser.} 18 (1964), pp.~303--344.
\newblock {\sc issn:} 0036-9918.

\bibitem{zbMATH03783682}
M.~Oberguggenberger.
\newblock {\em Der {G}raphensatz in lokalkonvexen topologischen
  {V}ektorräumen}.
\newblock Teubner-Texte zur Mathematik 44.
\newblock Leipzig: Teubner, 1982.

\bibitem{MR573803}
N.~Ortner.
\newblock ``Sur la convolution des distributions.''
\newblock {\em C. R. Acad. Sci. Paris Sér. A-B} 290.12 (1980), pp.~A533--A536.
\newblock {\sc issn:} 0151-0509.

\bibitem{zbMATH05770245}
N.~Ortner.
\newblock ``On convolvability conditions for distributions.''
\newblock {\em Monatsh. Math.} 160.3 (2010), pp.~313--335.
\newblock {\sc issn:} 0026-9255.
\newblock \bibdoi{10.1007/s00605-008-0087-6}.

\bibitem{zbMATH06308371}
N.~Ortner, P.~Wagner.
\newblock {\em Distribution-valued analytic functions. Theory and applications.}
\newblock Hamburg: tredition, 2013.
\newblock {\sc isbn:} 978-3-8491-1968-3.

\bibitem{MR0350361}
A.~P. Robertson, W.~Robertson.
\newblock {\em Topological vector spaces}. 2nd ed.
\newblock Cambridge Tracts in Mathematics and Mathematical
  Physics 53.
\newblock London: Cambridge University Press, 1973.
\newblock {\sc isbn:} 978-0-521-20124-7.

\bibitem{MR0306857}
W.~Robertson.
\newblock ``On the closed graph theorem and spaces with webs.''
\newblock {\em Proc. London Math. Soc. (3)} 24 (1972), pp.~692--738.
\newblock {\sc issn:} 0024-6115.

\bibitem{Schaefer}
H.~H.~Schaefer.
\newblock {\em Topological Vector Spaces}.
\newblock New York: Springer-Verlag, 1971.
\newblock {\sc isbn:} 978-0-387-05380-6.

\bibitem{semschwartz}
L.~Schwartz.
\newblock Séminaire Schwartz.
\newblock Paris, 1953--1954.

\bibitem{FDVV}
L.~Schwartz.
\newblock ``Espaces de fonctions différentiables à valeurs vectorielles.''
\newblock {\em J. Anal. Math.} 4.1 (1955), pp.~88--148.
\newblock \bibdoi{10.1007/BF02787718}.

\bibitem{MR0090777}
L.~Schwartz.
\newblock ``Distributions semi-régulières et changements de coordonnées.''
\newblock {\em J. Math. Pures Appl. (9)} 36 (1957), pp.~109--127.
\newblock {\sc issn:} 0021-7824.

\bibitem{zbMATH03145498}
L.~Schwartz.
\newblock ``Théorie des distributions à valeurs vectorielles.''
\newblock {\em Ann. Inst. Fourier} 7 (1957).
\newblock {\sc issn:} 0373-0956.
\newblock \bibdoi{10.5802/aif.68}.

\bibitem{zbMATH03145499}
L.~Schwartz.
\newblock ``Théorie des distributions à valeurs vectorielles II.''
\newblock {\em Ann. Inst. Fourier} 8 (1958).
\newblock {\sc issn: }0373-0956.
\newblock \bibdoi{10.5802/aif.77}.

\bibitem{TD}
L.~Schwartz.
\newblock {\em Théorie des distributions}.
\newblock Nouvelle édition, entièrement corrigée, refondue et
  augmentée.
\newblock Paris: Hermann, 1966.

\bibitem{zbMATH03312960}
R.~Shiraishi, Y.~Hirata.
\newblock ``Convolution maps and semi-group distributions.''
\newblock {\em J. Sci. Hiroshima Univ., Ser. A-I} 28 (1964), pp.~71--88.
\newblock {\sc issn:} 0386-3026.

\bibitem{zbMATH03740168}
M.~Valdivia.
\newblock ``A representation of the space $\mathcal{O}_M$.''
\newblock {\em Math. Z.} 177 (1981), pp.~463--478.
\newblock {\sc issn:} 0025-5874.
\newblock \bibdoi{10.1007/BF01219081}.

\bibitem{zbMATH03822460}
D.~Vogt.
\newblock ``Sequence space representations of spaces of test functions and
  distributions.''
\newblock In: {\em Functional analysis, holomorphy, and
  approximation theory} (Rio de Janeiro, 1979). Ed.\ by G.~I.~Zapata. Lect. Notes Pure Appl. Math 83.
\newblock New York: Dekker, 1983, pp.~405--443.
\newblock {\sc isbn:} 978-0-8247-1634-9.

\end{thebibliography}
\end{document}